\newtheorem{theorem}{Theorem}
\newtheorem{lemma}[theorem]{Lemma}
\newtheorem{remark}{Remark}
\def\eps{\varepsilon}
\def\ZZ{\mathbb{Z}}
\def\NN{\mathbb{N}}
\def\N{\mathbb{N}}
\def\R{\mathbb{R}}
\def\PP{\mathbb{P}}
\def\P{\mathbb{P}}
\def\p{\mathbb{P}}
\def\EE{\mathbb{E}}
\def\F{\mathcal{F}}
\def\E{\mathcal{E}}
\def\L{\mathcal{L}}
\DeclareMathOperator{\var}{var}
\begin{document}

\begin{frontmatter}
\title{Longest common substring for random subshifts of finite type}
\runtitle{Longest common substring for random subshifts of finite type}
\thankstext{T1}{This work was partially supported by CNPq, by FCT project PTDC/MAT-PUR/28177/2017, with national funds, and by CMUP (UIDB/00144/2020), which is funded by FCT with national (MCTES) and European structural funds through the programs FEDER, under the partnership agreement PT2020.}

\begin{aug}
\author{\fnms{J\'er\^ome} \snm{Rousseau}
\ead[label=e3]{jerome.rousseau@ufba.br}%
\ead[label=u2,url]{www.sd.mat.ufba.br/\textasciitilde jerome.rousseau}}

\affiliation{Universidade do Porto and Universidade Federal da Bahia}
\address{Departamento de Matem\'atica, \\ Faculdade de Ci\^encias da Universidade do Porto,\\Rua do Campo Alegre, 687, \\4169-007 Porto, Portugal\\
\printead{e3}\\
\printead{u2}}
\address{Departamento de Matem\'atica, \\ Universidade Federal da Bahia,\\ Av. Ademar de Barros s/n, \\40170-110 Salvador, Brazil}

\runauthor{J. Rousseau}

\end{aug}
\begin{abstract}
In this paper, we study the behaviour of the longest common substring for random subshifts of finite type (for dynamicists) or of the longest common substring for random sequences in random environments (for probabilists).
We prove that, under some exponential mixing assumptions, this behaviour is linked to the R\'enyi entropy of the stationary measure. We emphasize that what we establish is a quenched result.
\end{abstract}

\selectlanguage{french}
\begin{abstract}
Dans cet article, nous \'etudions le comportement de la plus longue sous-cha\^ine commune pour des sous-shifts al\'eatoires de type fini (pour les dynamiciens) ou de la plus longue sous-cha\^ine commune pour des suites al\'eatoires en milieux al\'eatoires (pour les probabilistes). Nous prouvons que, sous des hypoth\'eses de m\'elange exponentiel, ce comportement est li\'e \`a l'entropie de R\'enyi de la mesure stationnaire. Nous soulignons que ce que nous \'etablissons est un r\'esultat fibr\'e.
\end{abstract}

\begin{keyword}
\kwd{Longest common substring, R\'enyi entropy, random dynamical systems, random sequences in random environments, string matching}
\end{keyword}

\begin{keyword}[class=MSC] 
\kwd{60F15, 60K37, 37A50, 37A25, 37Hxx, 94A17, 92D20}
\end{keyword}

\end{frontmatter}


\section*{Introduction}

To try and measure the similarity between sequences, one has to develop computational tools to compare the sequences (and to optimize the algorithm) and probabilistic tools to discern the significance of the relationship. Thus sequences comparison (and in particular sequences alignment and sequences matching) takes its roots in computer science and probability and has applications in areas as diverse as bioinformatics, geology, linguistics or social sciences. We refer the reader to \cite{RSW,W-book} for a broad introduction to sequences comparison (with a particular attention to biology).

One particularly relevant object in DNA comparison is the longest common substring, i.e. the longest string of DNA which appears in two (or more) strands.  
For example, for the following two strands
\[ACAATGAGAGGATGACCTTG\]
\[TGACTGTAACTGACACAAGC\]
a longest common substring is ACAA (TGAC is also a longest common substring) and is of length 4 when the total length of the strands is 20. A way to distinguish if this behaviour is common or rare is to obtain probabilistic results which allow us to understand the statistical significance of our comparison.

In this paper, we will concentrate on the behaviour of the length of the longest common substring when the length of the strings grows, more precisely, for two sequences $x$ and $y$, the behaviour, when $n$ goes to infinity, of
\[M_n(x,y)=\max\{m:x_{i+k}=y_{j+k}\textrm{ for $k=1,\dots,m$ and for some $0\leq i,j\leq n-m$}\}.\]
For sequences drawn randomly from the same alphabet, this problem was studied by Arratia and Waterman in \cite{AW}. More precisely, if each term of the sequences is drawn independently within some alphabet $\mathcal{A}$ with respect to some probability $\mathcal{P}$, then they proved that for $\mathcal{P}^\N\otimes\mathcal{P}^\N$-almost every $(x,y)\in\mathcal{A}^\NN\times\mathcal{A}^\NN$
\begin{equation*}
\lim_{n\rightarrow\infty}\frac{M_n(x,y)}{\log n}=\frac{2}{-\log p}
\end{equation*}
where $p=\sum_{a\in\mathcal{A}}\mathcal{P}(a)^2$.

They also proved the same result for independent irreducible and aperiodic Markov chains on a finite alphabet, and in this case $p$ is the largest eigenvalue of the matrix $[(p_{ij})^2]$ (where $[p_{ij}]$ is the transition matrix).

In fact, one can observe that in both case, $-\log p$ corresponds to the R\'enyi entropy of $\mu$ defined (provided that it exists) by
$${H}_2(\mu) = \underset{k \to \infty}{{\lim}}\frac{\log\sum \mu(C_k)^2}{-k}$$
where the sums are taken over all k-cylinders. Even if the existence of the R\'enyi entropy is not known in general, it was computed in some particular cases: Bernoulli shift, finite state Markov chains, Gibbs measure of a H\"older-continuous potential \cite{haydnvaienti} and infinite state Markov chains \cite{ciuperca}. The existence was also proved for $\phi$-mixing measures \cite{luc}, for weakly $\psi$-mixing processes \cite{haydnvaienti} and for $\psi_g$-regular processes \cite{AbCa, AbLa2}. 

Generalizations of the work \cite{AW} to sequences of different lengths, different distributions, more than two sequences, extreme value theory for sequence matching and distributional results can be found in e.g. \cite{AW1,AGW,AW2,AW3,KO,DKZ,Neu, Manson}. In a similar direction, one can also see \cite{ColletRedig,deb} (and references therein) where the authors investigate the growth rate of the maximal overlap in a string (i.e. the growth rate of the length of the longest repeated substring). We also refer to \cite{Shield,AV, AbLa2,KASW} for relatively close problems.

Recently, in \cite{BaLiRo}, the results of Arratia and Waterman were generalized to $\alpha$-mixing systems with exponential decay (and $\psi$-mixing with polynomial decay) and it was proved that if the R\'enyi entropy exists then for $\mu\otimes\mu$-almost every $(x,y)$
\[\lim_{n\rightarrow\infty}\frac{M_n(x,y)}{\log n}=\frac{2}{H_2(\mu)}.\]

Furthermore, it was also shown in this paper that a generalization of the longest common substring problem for dynamical systems is to study the behaviour of the shortest distance between two orbits, which is, for a dynamical system $(X,T,\mu)$, the behaviour, when $n$ goes to infinity, of
\begin{equation*}\label{defshortest}
m_n(x,y)=\min_{i,j=0,\dots,n-1}\left(d(T^ix,T^jy)\right).\end{equation*}
Moreover, a relation between $m_n$ and the correlation dimension of the invariant measure was proved.

It is natural to try and obtain the same type of results for random dynamical systems since they could model more precisely physical phenomena. For random sequences, this could correspond for example to a modification (e.g. a small perturbation) on the probability with which the letters of the alphabet are drawn (i.e. random sequences in random environments). For dynamical systems, this could correspond to adding some random noise or small perturbations while iterating the same transformation, or iterating different transformations drawn randomly within a family of transformations (see e.g. \cite{kifliu} for an introduction to random dynamical systems).

In \cite{colaro}, the behaviour of the longest common substring of encoded sequences (and of the shortest distance between observed orbits) were studied and a relation with the R\'enyi entropy of the pushforward measure was proved. In particular, it allows the authors to obtain annealed results on the shortest distance between orbits of random dynamical systems. 

Obtaining quenched results is much more delicate, in particular because generally the random maps do not have a common invariant measure. The first family of random dynamical systems to study and where one can hope to obtain results are random subshifts of finite type. Indeed, good mixing properties have been proved (see e.g. \cite{kif, boggun,KifLim, stad}) which allows to get other statistical properties (e.g. \cite{RSV,RT,HT} for the distribution of hitting times, \cite{FFV} for extreme value laws). Following this idea and the setting of these papers, we study here the behaviour of the longest common substrings for random subshifts of finite type (in probabilistic language, this corresponds to the longest common substring for random sequences in random environments) and prove a link with the R\'enyi entropy of the stationary measure.

The paper is organized as follows. In Section~\ref{sec:statement}, we will define random subshifts of finite type, explain our assumptions and give an upper bound (Theorem~\ref{thprinc}) and a lower bound (Theorem~\ref{thprinc2} and Theorem~\ref{thprinc3}) for the growth rate of the longest common substring for random subshifts. In Section~\ref{example}, we will apply our results to random Bernoulli shifts and random Gibbs measures. The proof of the theorems will be given in Section~\ref{secproof}.

\section{Statement of the main results}\label{sec:statement}

We first give the definition of a random subshift of finite type.
Let $(\Omega,\theta,\PP)$ be an invertible ergodic measure preserving system, set $X=\{1,\dots,N\}^\NN$  for some $N\in \NN$ and let $\sigma: X \to X$ denote the shift. Let  $b:\Omega \to \{1,\dots,N\}$ be a random variable. 
Let $A=\left\{A(\omega)=(a_{ij}(\omega)):\omega\in \Omega\right\}$ be a random transition matrix, i.e. for any $\omega\in\Omega$, $A(\omega)$ is a $b(\omega)\times b(\theta\omega)$-matrix with entries in $\{0,1\}$, at least one non-zero entry in each row and each column and such that $\omega\mapsto a_{ij}(\omega)$ is measurable for any $i\in\NN$ and $j\in\NN$.  
For any $\omega\in \Omega$
define the subset of the integers $X_\omega=\{1,\ldots,b(\omega)\}$ and
\begin{equation*}
\E_\omega =\{x=(x_0,x_1,\ldots)\colon x_i\in X_{\theta^i\omega} \text{ and } a_{x_i x_{i+1}}(\theta^i\omega)=1\text{ for all } i\in\NN\}\subset X, 
\end{equation*}
\begin{equation*}
\E = \{(\omega,x)\colon \omega\in\Omega,x\in\E_\omega\} \subset \Omega\times X.
\end{equation*}
We consider the random dynamical system coded by the skew-product $S : \E \to \E$ given by
$S(\omega,x)= (\theta \omega,\sigma x)$. Let $\nu$ be an $S$-invariant probability
measure with marginal $\PP$ on $\Omega$ and let $(\mu_\omega)_\omega$ denote
its decomposition on $\E_\omega$, that is, $d\nu(\omega,x)=d\mu_\omega(x)d\PP(\omega)$. 
The measures $\mu_\omega$ are called the \emph{sample measures}. Note $\mu_\omega(A)=0$ if $A\cap X_\omega=\emptyset$. We denote by $\mu=\int \mu_\omega \, d\PP$ the marginal of $\nu$ on $X$.

We emphasize that the sample measures are not invariant. However, since $\theta$ is invertible, by $\sigma$-invariance of $\nu$ and almost everywhere uniqueness of the 
decomposition $d\nu=d\mu_\omega \, d\mathbb{P} $, we get for $\PP$-almost every $\omega\in\Omega$,
\begin{equation}\label{eqinv}
(\sigma^i)_*\mu_\omega=\mu_{\theta^i\omega} \qquad\textrm{for all $i\in\NN$.}
 \end{equation}

For $y\in X$ we denote by $C_n(y)=\{z \in X : y_i=z_i \text{ for all } 
0\le i\le n-1\} $ the  \emph{$n$-cylinder} that contains $y$. Set $\F_0^n(X)$ as the sigma-algebra in $X$ 
generated by all the $n$-cylinders.

As explain in the introduction, for two sequences $x,y\in X$, we are interested in the asymptotic behaviour of the longest common substring, that is the behaviour of
\[M_n(x,y)=\max\{m:x_{i+k}=y_{j+k}\textrm{ for $k=1,\dots,m$ and for some $0\leq i,j\leq n-m$}\}.\]

We will show it is linked to the R\'enyi entropy of the stationary measure $\mu$. Thus, we define the lower and upper R\'enyi entropies of the measure $\mu$: 
$$\underline{H}_2(\mu) = \underset{k \to \infty}{\underline{\lim}}\frac{\log\sum \mu(C_k)^2}{-k} \ \  \mbox{and} \ \ \overline{H}_2(\mu) = \underset{k \to \infty}{\overline{\lim}}\frac{\log\sum \mu(C_k)^2}{-k} \ ,$$
where the sums are taken over all k-cylinders. When the limit exists we denote by ${H}_2(\mu)$ the common value.

To obtain our results, we will need information on the decay of the measure of cylinders, thus we define
\[h_0=\underset{k\rightarrow+\infty}{\underline{\lim}}\frac{\log\int_\Omega\underset{C_k}{\max}\mu_\omega(C_k) d\PP}{-k}\]
where the max is taken over all k-cylinders.

We will assume the following: there is a constant $a\in [0,1)$ and a function $\alpha(g)$ satisfying $\alpha(g)=\mathcal{O}(a^g)$ such that
for all $n,m$, $A\in\F_0^n(X)$ and $B\in\F_0^m(X)$:

\begin{itemize}
\item[(I)] the marginal measure $\mu$ satisfies
\[
\left|\mu(A\cap\sigma^{-g-n}A) -\mu(A)^2\right|\le \alpha(g);
\]
\item[(II)] (fibered exponential $\alpha$-mixing) for $\PP$-almost every $\omega\in\Omega$
\[
\left|\mu_\omega(A\cap\sigma^{-g-n}B) -\mu_\omega(A)\mu_{\theta^{n+g}\omega}(B)\right|\le \alpha(g).
\]

\end{itemize}

One can observe that assumption (I) is weaker than $\alpha$-mixing since in the intersection we only deal with the same cylinder $A$. We recall that the measure $\mu$ is $\alpha$-mixing if:
\begin{itemize}
\item[(I-a)] (exponential $\alpha$-mixing) the marginal measure $\mu$ satisfies
\[
\left|\mu(A\cap\sigma^{-g-n}B) -\mu(A)\mu(B)\right|\le \alpha(g)
\]
for all $m,n$, $A\in\F_0^n(X)$ and $B\in \F_0^{m}(X)$.
\end{itemize}

Before stating our results, we will consider the annealed case:

\begin{theorem}[Theorem 4.4 \cite{colaro}] \label{thprincann} If $0<\underline{H}_2(\mu)$, then
\[
\underset{n \to \infty}{\overline{\lim}}\frac{M_n(x,y)}{\log n} \leq \frac{2}{\underline{H}_2(\mu)} \textrm{ for $\nu
\otimes \nu$-almost every $((\omega,x),(\tilde\omega,y)) \in \E\times \E$.}
\]
 Moreover, if hypothesis (I-a) holds, then
\[
\underset{n \to \infty}{\underline{\lim}}\frac{M_n(x,y)}{\log n} \geq \frac{2}{\overline{H}_2(\mu)}  \textrm{ for $\nu
\otimes \nu$-almost every $((\omega,x),(\tilde\omega,y)) \in \E\times \E$.}\]
\end{theorem}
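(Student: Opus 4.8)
The statement concerns only the pair $(x,y)$, which under $\nu\otimes\nu$ has law $\mu\otimes\mu$; moreover, although the sample measures are not invariant, combining \eqref{eqinv} with the $\theta$-invariance of $\PP$ shows that the marginal $\mu$ satisfies $\sigma_*\mu=\mu$, so $\mu$ is a $\sigma$-invariant measure which, under (I-a), is $\alpha$-mixing with exponential rate. Hence the plan is to establish the two bounds for $\mu$, following \cite{BaLiRo}. Write $p_m=\sum_{C_m}\mu(C_m)^2$, so that $\underline H_2(\mu)=\liminf_m\big(-\tfrac1m\log p_m\big)$ and $\overline H_2(\mu)=\limsup_m\big(-\tfrac1m\log p_m\big)$.

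\emph{Upper bound} (no mixing needed). A common substring of length $m$ forces indices $0\le i,j\le n-m$ with $\sigma^ix$ and $\sigma^jy$ in the same $m$-cylinder, so by independence of $x$ and $y$ and $\sigma_*\mu=\mu$,
\[
\mu\otimes\mu\big(M_n(x,y)\ge m\big)\ \le\ (n-m+1)^2\sum_{C_m}\mu(C_m)^2\ \le\ n^2 p_m .
\]
Since $\underline H_2$ is a $\liminf$, $p_m\le e^{-m(\underline H_2-\eps)}$ for all large $m$. I would then choose a polynomially growing subsequence $n_k$ (so that $\log n_{k+1}/\log n_k\to1$) and $m_k=\lceil(\tfrac{2}{\underline H_2-\eps}+\delta)\log n_k\rceil$, which makes $\sum_k\mu\otimes\mu(M_{n_k}\ge m_k)$ finite once the exponent of the polynomial is large enough; Borel--Cantelli gives $M_{n_k}<m_k$ eventually, and since $n\mapsto M_n$ is nondecreasing, interpolating over $n\in[n_k,n_{k+1})$ yields $\limsup_n M_n/\log n\le\tfrac{2}{\underline H_2-\eps}+\delta$. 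Letting $\delta,\eps\to0$ along a countable sequence (discarding countably many null sets) finishes this half.

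\emph{Lower bound} (this is where (I-a) is used). One may assume $\overline H_2<\infty$, else the bound is trivial. Fix $\eps>0$, set $m=m(n)=\lfloor(\tfrac{2}{\overline H_2}-\eps)\log n\rfloor$ and a mixing gap $g=g(n)=\lceil c\log n\rceil$ with $c$ large, and let $I,J\subseteq\{0,\dots,n-m\}$ be maximal $(m+g)$-separated subsets, so $|I|,|J|\gtrsim n/\log n$. Put
\[
W=W_{n,m}(x,y)=\sum_{i\in I}\sum_{j\in J}\mathbbm{1}\{\sigma^ix\text{ and }\sigma^jy\text{ lie in the same }m\text{-cylinder}\},
\]
so that $\{W\ge1\}\subseteq\{M_n\ge m\}$. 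Because $\overline H_2$ is a $\limsup$, one has $p_m\ge e^{-m(\overline H_2+\eps')}$ for \emph{all} large $m$, whence $\EE[W]=|I|\,|J|\,p_m\ge n^{c_0}/(\log n)^2\to\infty$ for some $c_0>0$ as soon as $\eps'$ is chosen small relative to $\eps$. For the second moment, any two distinct index pairs have their $x$-windows, or their $y$-windows, separated by at least $g$; expanding the product of indicators over cylinders, using $\sigma_*\mu=\mu$ and (I-a), the diagonal contributes $\EE[W]$, the fully separated pairs contribute $(\EE[W])^2+O(|I|^2|J|^2\alpha(g))$, and the pairs sharing an $x$- or a $y$-window contribute $O\big((|I||J|^2+|I|^2|J|)(\sum_{C_m}\mu(C_m)^3+\alpha(g))\big)$. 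Using the elementary bound $\sum_{C_m}\mu(C_m)^3\le\big(\max_{C_m}\mu(C_m)\big)p_m\le p_m^{3/2}$ together with $|I|p_m^{1/2}\to\infty$, and the fact that $\alpha(g)=\mathcal O(a^g)$ decays like a (large) negative power of $n$, one gets $\mu\otimes\mu(W=0)\le\big(\EE[W^2]-(\EE[W])^2\big)/(\EE[W])^2$ with the right-hand side summable along a polynomially growing subsequence $n_k$. Borel--Cantelli, monotonicity of $M_n$ and $\log n_{k+1}/\log n_k\to1$ then give $\liminf_n M_n/\log n\ge\tfrac{2}{\overline H_2}-\eps$, and $\eps\to0$ concludes.

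\emph{Main obstacle.} The real work is the second-moment estimate, and inside it the \emph{correlated} terms: the pairs of index pairs that share an $x$- or a $y$-window (and, if one insists on summing over all positions rather than a separated subset, also the pairs whose windows overlap), where one must exploit that long quasi-periodic words carry little $\mu$-mass — handled above by the $(m+g)$-separation and the bound $\sum_{C_m}\mu(C_m)^3\le p_m^{3/2}$, and otherwise by an Arratia--Waterman-type autocorrelation argument. Interlocked with this is the bookkeeping that keeps the $\alpha$-mixing errors below $p_m^2$, which forces $g$ to grow like $\log n$; the whole point is the balance between $g\sim c\log n$, $|I|,|J|\sim n/\log n$, and the hypotheses $\underline H_2>0$ and $\overline H_2<\infty$, which is exactly what makes $\EE[W]\to\infty$ while the error terms vanish. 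The passage from the Borel--Cantelli subsequences to the full limits, via monotonicity of $M_n$ and the freedom to take $\log n_{k+1}/\log n_k\to1$, is routine.
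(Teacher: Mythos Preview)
Your reduction is exactly right: under $\nu\otimes\nu$ the pair $(x,y)$ has law $\mu\otimes\mu$, and $\sigma_*\mu=\mu$ (from \eqref{eqinv} and $\theta$-invariance of $\PP$), so the annealed statement is precisely the deterministic result of \cite{BaLiRo} applied to the $\sigma$-invariant, $\alpha$-mixing measure $\mu$. The paper does not give an independent proof here; it quotes Theorem~4.4 of \cite{colaro}, which in turn adapts \cite{BaLiRo}, and your outline reproduces that argument.

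The one genuine difference is that in the lower bound you restrict to an $(m+g)$-separated grid $I\times J$, whereas \cite{BaLiRo} (and the paper, in its proof of the quenched Theorem~\ref{thprinc2}) sum over all $0\le i,j\le n-1$ and then split the second moment according to which of $i,i',j,j'$ are within $g+k_n$ of one another. Your grid buys a cleaner case analysis --- the only correlated pairs are those with $i=i'$ or $j=j'$, handled by $\sum_{C_m}\mu(C_m)^3\le p_m^{3/2}$ --- at the cost of shrinking $|I|,|J|$ by a $\log n$ factor, which is harmless. The all-indices approach needs an extra step controlling overlapping windows (the ``close $i,i'$'' terms) via $\max_{C_m}\mu(C_m)$, but avoids introducing the grid; both routes lead to the same summable Chebyshev bound.
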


First of all, we observe that the statement of this theorem is slightly different that the one of Theorem 4.4 in \cite{colaro} since they consider more general dynamical systems and not only random subshifts of finite type. Nevertheless, one can adapt easily their results and proof to
obtain the theorem as stated here. 

One could wonder why the R\'enyi entropy of $\mu$ appears in these results (and not the R\'enyi entropy of $\nu$ for example). In fact, when studying $M_n$, we are not interested on the behaviour of the whole orbits $S^n(\omega,x)$ but only its projection on $X$ (called an observation of the dynamical system). More precisely, if $\pi:\E\rightarrow X$ denotes the canonical projection (i.e. $\pi(\omega,x)=x$), we study the behaviour of the image (or observation) of the orbits, that is $\pi(S^n(\omega,x))$. 
The idea of observing dynamical systems was developed in \cite{RS,Rousseau} to obtain annealed results for return times in random dynamical systems and for the shortest distance between random orbits in \cite{colaro}. Moreover, it was proved, that when observing dynamical systems, these quantities are linked with the dimension (or in our case the R\'enyi entropy) of the pushforward measure $\pi_*\nu$ (where $\pi_*\nu(.)=\nu(\pi^{-1}(.))$). Furthermore, in our random setting the pushforward measure $\pi_*\nu$ and the measure $\mu$ are equals (e.g. \cite[Proof of Theorem 8]{Rousseau}) and thus ${H}_2(\pi_*\nu)={H}_2(\mu)$.

Unfortunately, these technics only give annealed results, thus, in this paper, we will use different tools to obtain quenched results.

\begin{remark}We note that since $S$ is a dynamical system, one could apply (under the right assumptions) the results of \cite{BaLiRo} to study the shortest distance between orbits $m_n((\omega,x),(\tilde\omega,y))=\min_{i,j=0,\dots,n-1}\left(d(S^i(\omega,x),S^j(\tilde\omega,y))\right)$ and link it to the correlation dimension of $\nu$. Nevertheless, it will not give us precise informations on $M_n(x,y)$ since $m_n((\omega,x),(\tilde\omega,y))$ takes into account the distance between elements of the orbits of $(\omega,x)$ and $(\tilde\omega,y)$ while $M_n$ only considers elements of the orbits of $x$ and $y$.
\end{remark}

We present now the first main result of this section which gives an upper bound for the growth rate of the longest common substring. 

\begin{theorem} \label{thprinc} If $0<\underline{H}_2(\mu)\leq 2h_0$ and if hypothesis (I) and (II)  hold, then for $\PP$-almost every $\omega \in \Omega$,
\[
\underset{n \to \infty}{\overline{\lim}}\frac{M_n(x,y)}{\log n} \leq \frac{2}{\underline{H}_2(\mu)} \textrm{ for $\mu_\omega \otimes \mu_\omega$-almost every $(x,y) \in \E_\omega\times \E_\omega$.}
\]
\end{theorem}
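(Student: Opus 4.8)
The plan is to use a first-moment (Borel--Cantelli) argument on the event that $M_n(x,y)$ is large, exactly as in the deterministic case of \cite{BaLiRo}, but carried out fibrewise so as to get a quenched statement. Fix $\eps>0$ and set $m_n=\lceil \frac{2}{\underline H_2(\mu)-\eps}\log n\rceil$. For a fixed $\omega$ the event $\{M_n(x,y)\ge m_n\}$ is contained in $\bigcup_{0\le i,j\le n-m_n}\{x_{i+k}=y_{j+k},\ k=1,\dots,m_n\}$, so summing over cylinders of length $m_n$ and using the fibered mixing hypothesis (II) to decorrelate the shifted coordinates of $x$ from those of $y$ one gets, for $\PP$-a.e.\ $\omega$,
\[
(\mu_\omega\otimes\mu_\omega)\big(M_n(\cdot,\cdot)\ge m_n\big)\ \le\ n^2\sum_{C_{m_n}}\mu_{\theta^i\omega}(C_{m_n})\,\mu_{\theta^j\omega}(C_{m_n})\ +\ \text{(mixing error)} .
\]
The inner sum must then be controlled in $\omega$; this is where hypothesis (I) on the marginal $\mu$ enters.

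The key step is therefore to bound $\sum_{C_k}\mu_{\theta^i\omega}(C_k)\mu_{\theta^j\omega}(C_k)$ by something like $\sum_{C_k}\mu(C_k)^2$ up to subexponential factors, for a.e.\ $\omega$ and all relevant $i,j$. First I would write
$\sum_{C_k}\mu_{\theta^i\omega}(C_k)\mu_{\theta^j\omega}(C_k)\le \big(\max_{C_k}\mu_{\theta^i\omega}(C_k)\big)\sum_{C_k}\mu_{\theta^j\omega}(C_k)
=\max_{C_k}\mu_{\theta^i\omega}(C_k)$, and similarly with the roles exchanged; taking the geometric mean gives the bound $\big(\max_{C_k}\mu_{\theta^i\omega}(C_k)\big)^{1/2}\big(\max_{C_k}\mu_{\theta^j\omega}(C_k)\big)^{1/2}$. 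Now by the definition of $h_0$, $\int_\Omega \max_{C_k}\mu_\omega(C_k)\,d\PP$ decays like $e^{-(h_0-\eps)k}$; invoking Birkhoff's ergodic theorem (or a Borel--Cantelli argument on $\Omega$ together with $\theta$-invariance of $\PP$) lets me say that for $\PP$-a.e.\ $\omega$ and all large $k$, $\max_{C_k}\mu_{\theta^i\omega}(C_k)\le e^{-(h_0-2\eps)k}$ simultaneously for all $0\le i\le n$ once $k=m_n$ is of order $\log n$ — because $n=e^{(\underline H_2(\mu)-\eps)m_n/2+o(m_n)}$ is only subexponential in $m_n$, so the union over $i\le n$ of bad fibers is still summable. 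Combining, the main term is at most $n^2 e^{-2(h_0-2\eps)m_n}$, and since $\underline H_2(\mu)\le 2h_0$ and $m_n\approx\frac{2\log n}{\underline H_2(\mu)-\eps}$, a short computation shows $n^2 e^{-2(h_0-2\eps)m_n}\le n^2 e^{-(\underline H_2(\mu)-2\eps)m_n}$ is summable in $n$; the mixing error term, being $O(n^2 a^{g})$ for a suitable gap $g=g(m_n)$ growing like a small multiple of $m_n$, is likewise summable after the standard trick of inserting a gap of length $o(m_n)$ between the two blocks (which changes $m_n$ only by a negligible amount). Actually, to make the decorrelation legitimate one compares $M_n$ with the quantity where the two matching blocks are forced to be separated; this is handled exactly as in \cite{BaLiRo,colaro}.

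Once the series $\sum_n (\mu_\omega\otimes\mu_\omega)(M_n\ge m_n)$ converges for $\PP$-a.e.\ $\omega$, Borel--Cantelli gives $M_n(x,y)<m_n$ eventually for $\mu_\omega\otimes\mu_\omega$-a.e.\ $(x,y)$, hence $\limsup_n M_n/\log n\le \frac{2}{\underline H_2(\mu)-\eps}$; letting $\eps\to0$ along a countable sequence finishes the proof. The main obstacle I expect is the uniformity in the fiber: controlling $\max_{C_k}\mu_{\theta^i\omega}(C_k)$ \emph{simultaneously} for all $i$ up to $n\approx e^{c m_n}$ while $k=m_n$, which is why the hypothesis couples $h_0$ (a quenched/fibered quantity) with $\underline H_2(\mu)$ (an annealed quantity) through the condition $\underline H_2(\mu)\le 2h_0$; without that condition the fiberwise maximum is too crude and one would need a genuinely fibered Rényi-type quantity instead. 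A secondary technical point is that hypothesis (I), not full $\alpha$-mixing, is what is available for the marginal, so wherever the deterministic proof would decorrelate two \emph{distinct} cylinders of $\mu$ I must instead arrange to only ever decorrelate a cylinder from \emph{itself}, which is precisely the form in which (I) is stated.
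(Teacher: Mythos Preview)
Your proposal has a genuine gap: the crude bound $\sum_{C_k}\mu_{\theta^i\omega}(C_k)\mu_{\theta^j\omega}(C_k)\le\max_{C_k}\mu_{\theta^i\omega}(C_k)$ discards $\underline H_2(\mu)$ entirely and only recovers $h_0$. First, your arithmetic is off: the geometric mean of two quantities bounded by $e^{-(h_0-2\eps)k}$ is $e^{-(h_0-2\eps)k}$, not $e^{-2(h_0-2\eps)k}$. With the correct exponent the main term is $n^2 e^{-(h_0-2\eps)m_n}=n^{2-2(h_0-2\eps)/(\underline H_2(\mu)-\eps)}$, which goes to zero only if $h_0>\underline H_2(\mu)$; the hypothesis $\underline H_2(\mu)\le 2h_0$ only gives $h_0\ge\underline H_2(\mu)/2$, so your main term need not even be bounded, let alone summable. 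Second, even the quenched uniformity you invoke is not available: from $\int\max_{C_k}\mu_\omega(C_k)\,d\PP\le e^{-(h_0-\eps)k}$ and Markov you get $\PP(\max_{C_k}\mu_\omega(C_k)>e^{-(h_0-2\eps)k})\le e^{-\eps k}$, and the union bound over $i\le n\approx e^{(\underline H_2(\mu)-\eps)m_n/2}$ gives $n e^{-\eps m_n}$, which does not go to zero for small $\eps$.

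The paper's proof does not try to control each fibre directly. Instead it integrates over $\omega$ immediately: one shows $\PP(\EE_\omega(S_n)\ge 1/\log n)\le(\log n)\int\EE_\omega(S_n)\,d\PP$ and then estimates the integral by splitting the sum over $(i,j)$ into a near-diagonal part $|i-j|\le m=\gamma\log n$ (only $O(mn)$ terms, bounded via $h_0$) and a far-diagonal part. On the far part the key trick you are missing is to run (II) \emph{backwards}: for $j>i+m$ one has $\mu_{\theta^i\omega}(C_{k_n})\mu_{\theta^j\omega}(C_{k_n})\le\mu_{\theta^i\omega}(C_{k_n}\cap\sigma^{-(j-i)}C_{k_n})+\alpha(m-k_n)$; integrating in $\omega$ gives $\mu(C_{k_n}\cap\sigma^{-(j-i)}C_{k_n})$, to which (I) applies (same cylinder twice, exactly as stated) and yields $\mu(C_{k_n})^2$. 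This is what produces the sharp term $n^2\sum_{C_{k_n}}\mu(C_{k_n})^2$ and hence the appearance of $\underline H_2(\mu)$; $h_0$ only handles the thin diagonal strip. Finally the estimate $\PP(\EE_\omega(S_n)\ge 1/\log n)=O(1/\log n)$ is upgraded to a quenched statement by Borel--Cantelli along the subsequence $n_\kappa=\lceil e^{\kappa^2}\rceil$, followed by a second Borel--Cantelli in $(x,y)$ and the usual monotonicity argument to pass back to the full sequence. Note also that no mixing is needed to ``decorrelate $x$ from $y$'': they are independent under $\mu_\omega\otimes\mu_\omega$; the role of (II) is to relate the different sample measures $\mu_{\theta^i\omega}$ and $\mu_{\theta^j\omega}$.
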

One can notice that in the deterministic case \cite{BaLiRo} and in the annealed case, no mixing assumptions are needed to obtain the upper bound. As one can see in the proof of this theorem, the main problem and difference with the deterministic case is that the sample measures are not invariant which is the main reason to use mixing to obtain the upper bound (and the lower).

Moreover, one can observe that assuming $\underline{H}_2(\mu)\leq 2h_0$ is not a too restrictive assumption. Indeed, in the deterministic case this hypothesis is always satisfied (see e.g. \cite{haydnvaienti} in the proof of Theorem 1 (IV)). In the random setting, this assumption prohibits for example to have some sample measures with an extreme behaviour (relatively with the others).

To obtain a lower bound, we will need stronger assumptions: we will need $\alpha$-mixing for the measure $\mu$ and we will require some mixing properties for the base transformation $(\Omega, \theta, \P)$. 

First of all, we will treat the case when $(\Omega, \theta, \P)$ is a $\rho$-mixing two-sided shift, i.e. $\Omega=\mathcal{A}^\ZZ$ for some alphabet $\mathcal{A}$, $\theta$ is the shift and:

\begin{itemize}
\item[(III)] (exponential $\rho$-mixing) For all $n$ and for all $\psi\in L^2(\F_{-\infty}^n(\Omega))$ and $\phi\in L^2(\F_0^{\infty}(\Omega))$
\[
\left|\int_\Omega\psi.\phi\circ \theta^{n+g}\, d\P-\int_\Omega \psi d\P\int_\Omega\phi d \P\right|\leq \rho(g)\|\psi\|_2\|\phi\|_2
\]
with $\rho(n) =\mathcal{O}(a^{n})$.
\end{itemize}
Moreover, we will need that the sample measure $\mu_\omega$ of a cylinder of size $n$ does not depend on all the terms of $\omega$:
\begin{itemize}
\item[(IV)] there exists a function $h$ with $h(n)=\mathcal{O}(n)$ such that for $\P$-almost every $\omega$ and every cylinder $C\in\F_0^n(X)$, the function $\omega\mapsto\mu_\omega(C)$ belongs to $L^2(\F_{-h(n)}^{h(n)}(\Omega))$.
\end{itemize}
One can observe that it is quite simple to check if assumption (IV) is satisfied, however this assumption is restrictive and only enables us to work with some special family of sample measures. Nevertheless, if the system $(\Omega, \theta, \P)$ satisfies some stronger mixing assumption we will be able to work with more general families of sample measures. Thus, after the statement of the next theorem we will give an alternative couple of assumptions which also allows us to obtain a lower bound for the growth rate of the longest common substring.
\begin{theorem} \label{thprinc2}
 If $0<\underline{H}_2(\mu)\leq\overline{H}_2(\mu)< 2h_0$ and if hypothesis (I-a), (II), (III) and (IV) hold, then, for $\PP$-almost every $\omega \in \Omega$,
\[
\underset{n \to \infty}{\underline{\lim}}\frac{M_n(x,y)}{\log n} \geq \frac{2}{\overline{H}_2(\mu)}  \textrm{ for $\mu_\omega \otimes \mu_\omega$-almost every $(x,y) \in \E_\omega\times \E_\omega$.}
\]
Moreover, if the R\'enyi entropy exists, we get for $\PP$-almost every $\omega \in \Omega$,
\[
\lim_{n \to \infty}\frac{M_n(x,y)}{\log n} = \frac{2}{{H}_2(\mu)}\textrm{ for $\mu_\omega \otimes \mu_\omega$-almost every $(x,y) \in \E_\omega\times \E_\omega$.}
\]
\end{theorem}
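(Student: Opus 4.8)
The second assertion follows at once from the first: if $H_2(\mu)$ exists then $\underline{H}_2(\mu)=\overline{H}_2(\mu)=H_2(\mu)$, so $0<\underline{H}_2(\mu)\le 2h_0$, and since (I-a) implies the weaker (I), Theorem~\ref{thprinc} gives $\limsup_n M_n(x,y)/\log n\le 2/H_2(\mu)$; together with the $\liminf$ bound this is the limit. So I only discuss the $\liminf$ inequality. Fix $\eps>0$ and $m=m_n:=\lceil(2/\overline{H}_2(\mu)-\eps)\log n\rceil$. Since $n\mapsto M_n(x,y)$ and $n\mapsto m_n$ are non-decreasing, it is enough to show, along a geometrically growing sequence $(n_\ell)$, that $M_{n_\ell}(x,y)\ge m_{n_{\ell+1}}$ for all large $\ell$, for $\PP$-a.e.\ $\omega$ and $\mu_\omega\otimes\mu_\omega$-a.e.\ $(x,y)$; one then lets $\eps\downarrow0$ along a countable set. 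The plan is a second-moment argument: fix a large integer $s=s(a,\overline{H}_2(\mu),\eps)$ and let $I=I_n$, $J=J_n$ be arithmetic progressions of step $sm$ lying in the two halves of $\{0,\dots,n-m\}$, so that $\#I\asymp\#J\asymp n/m$, distinct blocks within $I$ (resp.\ $J$) are $(s-1)m$-separated, and the size-$O(m)$ coordinate windows of (IV) attached to $I$ and to $J$ are $\asymp n$-separated. Put $W=W_{n,m}(x,y)=\#\{(i,j)\in I\times J:\ x_{i+1}\cdots x_{i+m}=y_{j+1}\cdots y_{j+m}\}$; then $\{M_n\ge m\}\supseteq\{W\ge1\}$, so $\mu_\omega\otimes\mu_\omega(M_n<m)\le\var_{\mu_\omega\otimes\mu_\omega}(W)/\bigl(\EE_{\mu_\omega\otimes\mu_\omega}W\bigr)^2$.

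For the first moment, the fibered invariance \eqref{eqinv} gives $\EE_{\mu_\omega\otimes\mu_\omega}W=(\#I)(\#J)\sum_{C_m}\overline\mu_I(C_m)\overline\mu_J(C_m)$ with $\overline\mu_I(C):=(\#I)^{-1}\sum_{i\in I}\mu_{\theta^{i+1}\omega}(C)$ and analogously for $J$. I would compare this with the annealed $\sum_{C_m}\mu(C_m)^2$ by decomposing $\overline\mu_I(C)\overline\mu_J(C)-\mu(C)^2$ into three centred terms and bounding the resulting sums over $C_m$ \emph{globally} --- through the quantities $\sum_{C_m}\mu(C_m)\mu_\omega(C_m)$ and $\sum_{C_m}\mu_\omega(C_m)^2$ --- rather than cylinder by cylinder, so as to avoid a union bound over the $N^m$ cylinders. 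By (IV) these global functions are measurable over an $O(m)$-window of $\omega$, so by the exponential $\rho$-mixing (III) their Birkhoff averages over $I$ or $J$ concentrate around their $\PP$-means; and since $\sum_{C_m}\mu_\omega(C_m)^2\le\max_{C_m}\mu_\omega(C_m)$, the controlling $\PP$-variances involve $\EE_\PP\bigl[\sum_{C_m}\mu_\omega(C_m)^2\bigr]\le\int\max_{C_m}\mu_\omega(C_m)\,d\PP\le e^{-m(h_0-\eps')}$ (any $\eps'>0$, $m$ large). Combined with $\sum_{C_m}\mu(C_m)^2\ge e^{-m(\overline{H}_2(\mu)+\eps')}$ and $m\le(2/\overline{H}_2(\mu)-\eps)\log n+1$, a Chebyshev bound then gives, for $\PP$-a.e.\ $\omega$ and $\ell$ large, $\EE_{\mu_\omega\otimes\mu_\omega}W\gtrsim(n/m)^2e^{-m(\overline{H}_2(\mu)+\eps')}\ge n^{\delta}$ with some $\delta=\delta(\eps)>0$ (choosing $\eps'$ small enough); the exceptional $\PP$-set is summable along $(n_\ell)$ precisely because $e^{-m(h_0-\eps')}/\sum_{C_m}\mu(C_m)^2=e^{-m(h_0-\overline{H}_2(\mu)-2\eps')}$ stays a small enough power of $n$ for $m$ as large as $(2/\overline{H}_2(\mu))\log n$ --- this is where $\overline{H}_2(\mu)<2h_0$ enters.

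For the second moment, expand $W^2$ over pairs $\bigl((i,j),(i',j')\bigr)\in(I\times J)^2$. The exact diagonal contributes $\EE_{\mu_\omega\otimes\mu_\omega}W$, hence a ratio $\le n^{-\delta}$. For the terms with $i\ne i'$ and $j\ne j'$, the fibered $\alpha$-mixing (II) factorises the $x$- and the $y$-expectations, the main part summing to $\le\bigl(\EE W\bigr)^2$ and the error being $O(a^{(s-1)m})$ times a polynomial count, hence negligible after division by $\bigl(\EE W\bigr)^2$ once $s$ is large. The terms with $i=i'$, $j\ne j'$ (and symmetrically $j=j'$) are the delicate ones: the product forces the three $m$-words $x_{i+1}\cdots x_{i+m}$, $y_{j+1}\cdots y_{j+m}$, $y_{j'+1}\cdots y_{j'+m}$ to agree, so the expectation is $\le\bigl(\max_{C_m}\mu_{\theta^{j+1}\omega}(C_m)\bigr)\sum_{C_m}\mu_{\theta^{i+1}\omega}(C_m)\mu_{\theta^{j'+1}\omega}(C_m)+\alpha(\cdot)$; summing, and using (III)--(IV) to replace $\frac1{\#J}\sum_{j\in J}\max_{C_m}\mu_{\theta^{j+1}\omega}(C_m)$ by $\int\max_{C_m}\mu_\omega(C_m)\,d\PP\le e^{-m(h_0-\eps')}$, this contribution is $\lesssim(\#J)e^{-m(h_0-\eps')}\EE W$, i.e.\ a ratio $\lesssim\tfrac mn e^{-m(h_0-\overline{H}_2(\mu)-2\eps')}$, which tends to $0$ and is summable along $(n_\ell)$ exactly because $\overline{H}_2(\mu)<2h_0$. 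A first Borel--Cantelli in $\Omega$ (the exceptional $\PP$-sets above being summable) followed by a second one in the $(x,y)$-space then yields $M_{n_\ell}(x,y)\ge m_{n_{\ell+1}}$ eventually for $\PP$-a.e.\ $\omega$ and $\mu_\omega\otimes\mu_\omega$-a.e.\ $(x,y)$, and $\eps\downarrow0$ concludes.

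\emph{Main obstacle.} Unlike the deterministic \cite{BaLiRo} and annealed \cite{colaro} settings, where the first and second moments are computed directly, here the crux is quenched: one must replace annealed quantities such as $\sum_{C_m}\mu(C_m)^2$ and $\int\max_{C_m}\mu_\omega(C_m)\,d\PP$ by Birkhoff averages along the orbit of $\omega$, uniformly enough to pass a geometric Borel--Cantelli and without a ruinous union bound over the $N^m$ cylinders --- which is exactly what hypotheses (III) and (IV) are for --- while the standing assumption $\overline{H}_2(\mu)<2h_0$ is precisely what keeps both the first-moment error and the overlapping-block contribution to the variance negligible at thresholds up to $(2/\overline{H}_2(\mu))\log n$.
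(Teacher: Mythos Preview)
Your plan is essentially the paper's: a second-moment argument for $\mu_\omega\otimes\mu_\omega(M_n<m)$, with the quenched first moment $\EE_{\mu_\omega\otimes\mu_\omega}W$ shown to be close to the annealed $\sum_{C_m}\mu(C_m)^2$ via Chebyshev on $\Omega$ using (III)+(IV), followed by two Borel--Cantelli steps. The subsampling to well-separated arithmetic progressions $I,J$ placed in the two halves of $[0,n]$ is a legitimate variant; the paper instead keeps the full index square $\{0,\dots,n-1\}^2$, introduces a gap parameter $g=\beta\log n$, and runs a five-case analysis on the positions of $(i,j,i',j')$ (Lemma~\ref{lemesp}) to bound $\int_\Omega\bigl(\EE_\omega S_n\bigr)^2\,d\PP$ directly. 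The paper uses a threshold $\bigl(n^2\sum_{C_{k_n}}\mu(C_{k_n})^2\bigr)^\delta$ with $3/4<\delta<1$ and a $d\log\log n$ correction in $k_n$, and the sparse subsequence $n_\kappa=\lceil e^{\kappa^2}\rceil$, rather than a geometric one. Your organisation trades the $g$-based case split for exact equalities $i=i'$ or $j=j'$ on the subsampled grid, which is cleaner combinatorially; the paper's route avoids fixing $s$ and works at the natural scale.

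One point in your sketch deserves care. You assert that the first-moment concentration is governed by the ratio $e^{-m(h_0-\eps')}/\sum_{C_m}\mu(C_m)^2$, and that $\overline{H}_2(\mu)<2h_0$ makes this a summable power of $n$. For the \emph{variance of $W$} cross terms this is exactly right. For the \emph{concentration of $\EE_\omega W$}, however, a naive Chebyshev with threshold $t\asymp\sum_{C_m}\mu(C_m)^2$ and the crude bound $\Var_\PP[\Psi_{i,j}]\le e^{-m(h_0-\eps')}$ puts $(\sum_{C_m}\mu(C_m)^2)^2$ in the denominator, which would require the stronger condition $\overline{H}_2(\mu)<\tfrac{2}{3}h_0$. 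To land on your stated ratio you must exploit, on the partial-diagonal covariances $\Cov_\PP\bigl[\Psi_{i,j},\Psi_{i,j'}\bigr]$, the same ``pull out a $\max$ and then factor via $\rho$-mixing'' trick that the paper uses in its Case~2: bounding $\EE_\PP[\Psi_{i,j}\Psi_{i,j'}]\le\EE_\PP\bigl[\Psi_{i,j}\max_{C'}\mu_{\theta^{j'+1}\omega}(C')\bigr]$ and then applying (III)+(IV) to get $\lesssim\bigl(\sum_{C}\mu(C)^2\bigr)\cdot\int\max_C\mu_\omega(C)\,d\PP$. This extra factor $\sum_{C}\mu(C)^2$ is precisely what brings the required hypothesis down from $\overline{H}_2(\mu)<\tfrac{2}{3}h_0$ to $\overline{H}_2(\mu)<2h_0$. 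Your text hints at ``global'' quantities $\sum_{C_m}\mu_\omega(C_m)^2$ but does not make this step explicit; it is the heart of the paper's Lemma~\ref{lemesp} and must be carried out here too.
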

In Section \ref{exbernougibbs}, we will apply this result to random Bernoulli shifts. 
\begin{remark}[Infinite alphabets] One can observe in the proof of Theorem~\ref{thprinc2}, that stronger mixing assumptions for the stationary measure and the sample measures allow us to work with infinite alphabets. More precisely, if in Theorem~\ref{thprinc2}, one replaces assumptions (I-a) and (II) by

(I') (exponential $\phi$-mixing) the marginal measure $\mu$ satisfies
\[
\left|\mu(A\cap\sigma^{-g-n}B) -\mu(A)\mu(B)\right|\le \alpha(g)\mu(A);
\]
and 

(II') (fibered exponential $\phi$-mixing) for $\PP$-almost every $\omega\in\Omega$
\[
\left|\mu_\omega(A\cap\sigma^{-g-n}B) -\mu_\omega(A)\mu_{\theta^{n+g}\omega}(B)\right|\le \alpha(g)\mu(A),
\]
then the same conclusions are satisified.

\end{remark}

To deal with more general random subshifts (and in particular random Gibbs measures in Section \ref{exgibbs}) we will need a stronger mixing assumption on the base $(\Omega, \theta, \P)$ (satisfied for example for Anosov diffeomorphisms \cite{kotani}):
\begin{itemize}
\item[(III')](exponential $3$-mixing) There exists a Banach space $\mathcal{B}$ such that for all $\psi,\ \phi, \ \varphi\in \mathcal{B}$, for all $n\in\N^*$ and $m\in\N^*$,  we have
\[\left|\int_\Omega\psi.\phi\circ \theta^n.\varphi\circ \theta^{n+m}\, d\P-\int_\Omega \psi d\P\int_\Omega\phi d \P\int_\Omega\varphi d \P\right|\leq \|\psi\|_\mathcal{B}\|\phi\|_\mathcal{B}\|\varphi\|_\mathcal{B}\rho(\min(n,m))\]
with $\rho(n) =\mathcal{O}(a^{n})$ and $\|.\|_\mathcal{B}$ is the norm in the Banach space $\mathcal{B}$.
\end{itemize}
We are now able to replace assumption (IV) by a less restrictive assumption:
\begin{itemize}
\item[(IV')] There exists $\xi \geq0$ such that for every $n\in \N$ and every cylinder $C\in\F_0^n(X)$, the functions $\psi_1:\omega\mapsto \mu_\omega(C)$ and $\psi_2:\omega\mapsto\max_{C_n} \mu_\omega(C_n)$ (where the max is taken over all n-cylinders) belong to the Banach space $\mathcal{B}$ and
\[\|\psi_1\|_\mathcal{B}\leq \xi^n\qquad \textrm{and}\qquad \|\psi_2\|_\mathcal{B}\leq \xi^n.\]
\end{itemize}
Morever, if the base $(\Omega, \theta, \P)$ satisfies exponential $4$-mixing, it will allow us to weaken our mixing assumption for the marginal measure $\mu$ and use assumption (I):
\begin{itemize}
\item[(III'')](exponential $4$-mixing) There exists a Banach space $\mathcal{B}$ such that for all $\psi,\ \phi, \ \varphi,\ \upsilon\in \mathcal{B}$, for all $n\in\N^*$, $m\in\N^*$ and $l\in\N^*$,  we have
\[\hspace{-1cm}\left|\int_\Omega\psi.\phi\circ \theta^n.\varphi\circ \theta^{n+m}.\upsilon\circ \theta^{n+m+l}\, d\P-\int_\Omega \psi d\P\int_\Omega\phi d \P\int_\Omega\varphi d \P\int_\Omega\upsilon d \P\right|\leq \|\psi\|_\mathcal{B}\|\phi\|_\mathcal{B}\|\varphi\|_\mathcal{B}\|\upsilon\|_\mathcal{B}\rho(\min(n,m,l))\]
with $\rho(n) =\mathcal{O}(a^{n})$ and $\|.\|_\mathcal{B}$ is the norm in the Banach space $\mathcal{B}$.
\end{itemize}
In Section \ref{exgibbs}, we will check these assumptions for random Gibbs measures and will chose the Banach space $\mathcal{B}$ to be the space of H\"older continuous functions. 

With these assumptions, we obtain the same results as in Theorem~\ref{thprinc2}:
\begin{theorem} \label{thprinc3}
 If $0<\underline{H}_2(\mu)\leq\overline{H}_2(\mu)< 2h_0$ and if 
 
 $\bullet$ hypothesis (I-a), (II), (III') and (IV') are satisfied,\\
 or
 
 $\bullet$ hypothesis (I), (II), (III'') and (IV') are satisfied,\\
 then the conclusions of Theorem~\ref{thprinc2} hold.\end{theorem}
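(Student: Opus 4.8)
The plan is to reproduce the proof of Theorem~\ref{thprinc2} almost verbatim, the only change being that every appeal to the two‑sided $\rho$‑mixing (III) of the base together with the finite‑range dependence (IV) is replaced by an appeal to the higher‑order mixing (III') or (III'') together with the Banach‑norm bound (IV'). Fix $\eps>0$, set $m=m_n=\lceil(2/\overline H_2(\mu)-\eps)\log n\rceil$, and let
\[
W_n(x,y)=\sum_{0\le i,j\le n-m}\mathbbm{1}\{x_{i+k}=y_{j+k}\text{ for }k=1,\dots,m\},
\]
so that $\{M_n(x,y)\ge m\}=\{W_n>0\}$. Using the fibered invariance \eqref{eqinv}, the $\mu_\omega$‑law of the coordinates $x_{i+1},\dots,x_{i+m}$ is $\mu_{\theta^{i+1}\omega}$ of the corresponding $m$‑cylinder, so with $S_w(\omega)=\sum_{i=1}^{n-m+1}\mu_{\theta^i\omega}(C_m^w)$ (the sum over $m$‑words $w$, $C_m^w$ the $m$‑cylinder of $w$) one has $\EE_{\mu_\omega\otimes\mu_\omega}[W_n]=\sum_wS_w(\omega)^2$. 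As in Theorem~\ref{thprinc2}, the aim is to establish, for $\PP$‑almost every $\omega$ and along the subsequence $n_k=2^k$, the two moment estimates $\EE_{\mu_\omega\otimes\mu_\omega}[W_{n_k}]\to\infty$ and $\EE_{\mu_\omega\otimes\mu_\omega}[W_{n_k}^2]\le(1+o(1))\big(\EE_{\mu_\omega\otimes\mu_\omega}[W_{n_k}]\big)^2$, from which the lower bound follows by a Paley--Zygmund plus Borel--Cantelli argument.

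For the first moment one first controls the $\PP$‑average. By $\theta$‑invariance and by applying (III') with the third function equal to the constant $1$, for every word $w$ and every $g\ge1$,
\[
\Big|\int_\Omega\mu_\omega(C_m^w)\,\mu_{\theta^g\omega}(C_m^w)\,d\PP-\mu(C_m^w)^2\Big|\lesssim\rho(g)\,\xi^{2m}
\]
(the norm bound from (IV'), with $\psi_1:\omega\mapsto\mu_\omega(C_m^w)$). Summing the geometric series $\sum_g\rho(g)$ (possible since $\rho(g)=\mathcal O(a^g)$) and treating the $O(m)$ pairs with $|i-i'|$ small separately via $\mu_\omega(C_m)\le\max_{C_m}\mu_\omega(C_m)$ and $\int_\Omega\max_{C_m}\mu_\omega(C_m)\,d\PP\le e^{-m(h_0-o(1))}$, one gets $\int_\Omega\EE_{\mu_\omega\otimes\mu_\omega}[W_n]\,d\PP=(1+o(1))(n-m+1)^2\sum_w\mu(C_m^w)^2$; here the hypothesis $\overline H_2(\mu)<2h_0$ is exactly what makes the $e^{-mh_0}$ diagonal contribution negligible compared with $\sum_w\mu(C_m^w)^2$ when $m=m_n$. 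Since $\sum_w\mu(C_m^w)^2\ge e^{-m(\overline H_2(\mu)+o(1))}$ by definition of $\overline H_2(\mu)$ and $m_n=O(\log n)$, this average is $\ge n^{c}$ for some $c=c(\eps)>0$. To pass from the average to $\PP$‑almost every $\omega$, one bounds the $\PP$‑variance of $\omega\mapsto\sum_wS_w(\omega)^2$; this is a sum over pairs of words of covariances of products of (at most) four shifts $\mu_{\theta^\bullet\omega}(C_m)$, and it is here that the four‑fold mixing (III'') is needed under the weaker marginal assumption (I), whereas under the stronger (I‑a) one of the required decouplings can be performed on $X$ rather than on $\Omega$ and the three‑fold mixing (III') suffices; the polynomial‑in‑$n$ losses $\xi^{2m}$, $N^{m}$ and the exponential decay of $\rho$ combine, together with $\overline H_2(\mu)<2h_0$, to show this variance is $o$ of the square of the mean. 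Chebyshev's inequality then produces good sets $\Omega_k\subset\Omega$ with $\sum_k\PP(\Omega_k^c)<\infty$ on which $\EE_{\mu_\omega\otimes\mu_\omega}[W_{n_k}]$ is at least half its average, hence $\to\infty$.

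For the second moment one expands $\EE_{\mu_\omega\otimes\mu_\omega}[W_n^2]$ as a sum over quadruples $(i,i',j,j')$ and partitions them, as in \cite{AW,BaLiRo,colaro}, into an off‑diagonal part (the $x$‑indices $\{i,i'\}$ and the $y$‑indices $\{j,j'\}$ both separated by at least $2m$) and the remaining near‑diagonal part. On the off‑diagonal part, decoupling inside each sample measure with the fibered $\alpha$‑mixing (II) turns each summand into $\sum_{w,w'}\mu_{\theta^{i+1}\omega}(C_m^w)\mu_{\theta^{i'+1}\omega}(C_m^{w'})\mu_{\theta^{j+1}\omega}(C_m^w)\mu_{\theta^{j'+1}\omega}(C_m^{w'})$ up to an error $\le\alpha$, and summing over the indices yields precisely $\big(\EE_{\mu_\omega\otimes\mu_\omega}[W_n]\big)^2$ up to lower‑order terms; the near‑diagonal part is estimated via $\mu_\omega(C_m)\le\max_{C_m}\mu_\omega(C_m)$ and is negligible again because $\overline H_2(\mu)<2h_0$, while the marginal mixing (I‑a) or (I) is used — as in the annealed case and in Theorem~\ref{thprinc2} — to handle the near‑diagonal contributions that survive $\PP$‑integration. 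Once more, passing from the $\PP$‑average to $\PP$‑almost every $\omega$ along $n_k=2^k$ uses Chebyshev and the higher‑order base mixing (this is the precise point where (III') is enough under (I‑a) but (III'') is required under (I)); shrinking the $\Omega_k$ further, still with $\sum_k\PP(\Omega_k^c)<\infty$, one has $\EE_{\mu_\omega\otimes\mu_\omega}[W_{n_k}^2]\le(1+o(1))\big(\EE_{\mu_\omega\otimes\mu_\omega}[W_{n_k}]\big)^2$ on $\Omega_k$.

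Finally, on $\Omega_k$ the Paley--Zygmund inequality gives $\mu_\omega\otimes\mu_\omega(W_{n_k}>0)\ge\frac{(\EE_{\mu_\omega\otimes\mu_\omega}W_{n_k})^2}{\EE_{\mu_\omega\otimes\mu_\omega}W_{n_k}^2}\ge1-\delta_k$ with $\sum_k\delta_k<\infty$, so $\int_\Omega\mu_\omega\otimes\mu_\omega(W_{n_k}=0)\,d\PP\le\PP(\Omega_k^c)+\delta_k$ is summable; by Tonelli, $\sum_k\mu_\omega\otimes\mu_\omega(W_{n_k}=0)<\infty$ for $\PP$‑almost every $\omega$, and Borel--Cantelli then gives, for such $\omega$ and for $\mu_\omega\otimes\mu_\omega$‑almost every $(x,y)$, that $M_{n_k}(x,y)\ge m_{n_k}$ for all large $k$. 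Since $n\mapsto M_n$ is non‑decreasing and $\log n_{k+1}/\log n_k\to1$, this yields $\liminf_{n}M_n(x,y)/\log n\ge2/\overline H_2(\mu)-\eps$; letting $\eps\downarrow0$ proves the first assertion, and combining with Theorem~\ref{thprinc} gives the limit when $H_2(\mu)$ exists. The main obstacle is precisely this quenched upgrade: with (IV) replaced by the much weaker (IV'), the functions $\omega\mapsto\mu_\omega(C_m)$ need no longer be measurable with respect to a bounded number of base coordinates, so controlling the $\PP$‑fluctuations of the random moments $\EE_{\mu_\omega\otimes\mu_\omega}[W_n]$ and $\EE_{\mu_\omega\otimes\mu_\omega}[W_n^2]$ forces the use of three‑ or four‑fold base mixing, and the bookkeeping keeping every polynomial‑in‑$n$ error (from the $N^{m}$ words and the norms $\xi^{m}$) strictly below the main term $n^2\sum_w\mu(C_m^w)^2$ — where $\overline H_2(\mu)<2h_0$ is essential — is the delicate part.
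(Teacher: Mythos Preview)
Your overall architecture — second moment method on $W_n$, quenched control of the random first moment $\EE_\omega(W_n)$ via a Chebyshev/Borel--Cantelli argument on $\Omega$, with (III')/(III'')+(IV') replacing (III)+(IV) in that step — is the paper's approach. But your organisation of the \emph{second} moment has a real gap. You propose to pass from the $\PP$-average of $\EE_\omega(W_n^2)$ to $\PP$-a.e.\ $\omega$ ``once more'' by Chebyshev and higher-order base mixing. Taken literally, this requires the $\PP$-variance of $\omega\mapsto\EE_\omega(W_n^2)$, and after fibered mixing each summand of $\EE_\omega(W_n^2)$ is a product of \emph{four} factors $\mu_{\theta^\bullet\omega}(C)$; its $\PP$-variance therefore involves products of up to eight such factors, which (III') or (III'') do not control. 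The paper avoids this entirely: the fibered mixing (II) yields a \emph{pointwise} bound
\[
\frac{\var_\omega(S_n)}{\EE_\omega(S_n)^2}\le \frac{2n^4\alpha(g)}{\EE_\omega(S_n)^2}+\frac{2(g+k_n)\sum_{i'}\max_{C}\mu_{\theta^{i'}\omega}(C)+(g+k_n)^2}{\EE_\omega(S_n)},
\]
so no quenched upgrade of the second moment is needed at all — only $\EE_\omega(S_n)$ (from below) and $\sum_{i'}\max_{C}\mu_{\theta^{i'}\omega}(C)$ (from above) must be controlled $\PP$-a.e. The latter uses only Markov and the definition of $h_0$; the former is the sole place where (III')/(III'')+(IV') enter, via the key Lemma (Lemma~\ref{lemesp} in the paper), whose proof decomposes $\int_\Omega\EE_\omega(S_n)^2\,d\PP$ into five cases according to the gaps among four indices. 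Your remark that ``(III') is enough under (I-a) but (III'') is required under (I)'' is correct, but this dichotomy lives in that lemma (the ``all four indices far apart'' case of $\int_\Omega\EE_\omega(S_n)^2\,d\PP$), not in the second-moment bound for $W_n$.

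Two smaller points. First, your subsequence $n_k=2^k$ is fine \emph{if} you exploit that $\EE_\omega(S_n)\ge n^{c(\eps)}$ grows polynomially; the paper uses only a polylog lower bound and therefore needs $n_\kappa=\lceil e^{\kappa^2}\rceil$. Second, the marginal mixing (I) or (I-a) is used for the \emph{first} moment (both in the lower bound $\int\EE_\omega(S_n)\,d\PP\ge n^2\sum\mu(C_{k_n})^2-\cdots$ and in the ``four indices far apart'' case of Lemma~\ref{lemesp}), not to handle near-diagonal second-moment terms as you describe.
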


We will now apply our results to random Bernoulli shifts and random Gibbs measures (these examples follow \cite{RSV,RT}, where assumptions (I-a) and (II) where proved to obtain a quenched exponential distribution of hitting times).

\section{Examples}\label{example}
\subsection{Random Bernoulli shifts}\label{exbernougibbs}

Let $s\ge1$ and $(\Omega,\theta)$ be a subshift of finite type on the symbolic space $\{0,1,\ldots,s\}^\ZZ$ and let $\PP$ be a Gibbs measure from a H\"older potential.

Let $b\ge1$ and make the shift $\{0,1,\ldots,b\}^\NN$ a random subshift by putting on it the random Bernoulli measures
constructed as follows. Let $W=(w_{ij})$ be a $s\times b$ stochastic matrix with entries in $(0,1)$. 
Set $p_j(\omega)=w_{\omega_0,j}$. The random Bernoulli measure 
$\mu_\omega$ is defined by 
$$\mu_\omega([x_0\dots x_n])=p_{x_0}(\omega)p_{x_1}(\theta\omega)\dots p_{x_n}(\theta^n\omega).$$ 

First of all, hypothesis (IV) is satisfied since $\mu_\omega([x_0\dots x_n])$ only depends on $\omega_0,\dots,\omega_{n}$.

Since $\mu_\omega$ are Bernoulli measures, 
one can observe that for all $m,n$, $A\in\F_0^n$ and $B\in \F_0^{m}$:
\begin{equation*}
\left|\mu_\omega(A\cap\sigma^{-g-n}B) -\mu_\omega(A)\mu_{\theta^{n+g}\omega}(B)\right|=0
\end{equation*}
for every $g\geq 1$ and every $\omega\in\Omega$. Thus, property (I-a) is satisfied. 

Moreover, it was proved in \cite{RSV} that assumption (II) is satisfied. Since the Gibbs measure $\PP$ is exponentially $\psi$-mixing, it is exponentially $\rho$-mixing and (III) is satisfied. Thus, if $0<\underline{H}_2(\mu)\leq 2h_0$ one can apply Theorem~\ref{thprinc} and if besides that $\overline{H}_2(\mu)<2h_0$ then one can apply Theorem~\ref{thprinc2}.

For example, when the base is i.i.d., we can compute the R\'enyi entropy. Indeed
\begin{eqnarray*}
\mu([x_0\dots x_n])&=&\int\mu_\omega([x_0\dots x_n])d\P(\omega)\\
&=&\int p_{x_0}(\omega)p_{x_1}(\theta\omega)\dots p_{x_n}(\theta^n\omega)d\P(\omega)\\
&=&\int p_{x_0}(\omega)d\P(\omega)\int p_{x_1}(\theta\omega)d\P(\omega)\dots \int p_{x_n}(\theta^n\omega)d\P(\omega)\\
&=&\int p_{x_0}(\omega)d\P(\omega)\int p_{x_1}(\omega)d\P(\omega)\dots \int p_{x_n}(\omega)d\P(\omega).
\end{eqnarray*}
Thus,
\[\sum_{C_n}\mu(C_n)^2=\left(\sum_{x_0}\left(\int p_{x_0}(\omega)d\P\right)^2\right)^{n}\]
and
\[H_2(\mu)=-\log\left(\sum_{x_0}\left(\int p_{x_0}(\omega)d\P\right)^2\right).\]
A similar computation gives us
\[h_0=-\log\left(\int \max_{x_0}p_{x_0}(\omega)d\P\right).\]
So, if $H_2(\mu)<2h_0$, we have for $\PP$-almost every $\omega \in \Omega$,
\[
\lim_{n \to \infty}\frac{M_n(x,y)}{\log n} = \frac{2}{-\log\left(\sum_{x_0}\left(\int p_{x_0}(\omega)d\P\right)^2\right)}\]
for $\mu_\omega \otimes \mu_\omega$-almost every $(x,y) \in X\times X$.

In this case, wether the condition $H_2(\mu)<2h_0$ holds or not can be easily checked. For example, this condition will be satisfied if the letter with the maximum weight is always the same. Indeed, assuming that it exists $b_0\in\{0,\dots,b\}$ such that $\max_{x_0}p_{x_0}(\omega)=p_{b_0}$ for every $\omega\in\Omega$, we observe that
\[
\sum_{x_0}\left(\int p_{x_0}(\omega)d\P\right)^2=p_{b_0}^2+\sum_{x_0\neq b_0}\left(\int p_{x_0}(\omega)d\P\right)^2>p_{b_0}^2
\]
and thus
\[H_2(\mu)<-\log (p_{b_0}^2)=2h_0.\]
Also, the condition $H_2(\mu)<2h_0$ will be satisfied if all the letters have a relatively close probability, i.e. if it exists a constant $P$ such that $P<p_j(\omega)<P\sqrt{b+1}$ for every $j\in\{0,\dots,b\}$ and every $\omega\in\Omega$. Indeed, in this case, we have
\begin{eqnarray*}
\left(\int \max_{x_0}p_{x_0}(\omega)d\P\right)^2&<&\left(\int P\sqrt{b+1}d\P\right)^2=P^2(b+1)=\sum_{x_0}\left(\int P d\P\right)^2\\
&<&\sum_{x_0}\left(\int p_{x_0}(\omega)d\P\right)^2
\end{eqnarray*}
and thus $H_2(\mu)<2h_0$. This could be applied to small perturbations of a uniform Bernoulli shift, i.e., $p_j(\omega)= \frac{1}{b+1}+\delta_j(\omega)$ with $\frac{1-\sqrt{b+1}}{(b+1)(1+\sqrt{b+1})}<\delta_j(\omega)<\frac{\sqrt{b+1}-1}{(b+1)(1+\sqrt{b+1})}$ for every $j\in\{0,\dots,b\}$ and every $\omega\in\Omega$ (one can easily check that in this case $P<p_j(\omega)<P\sqrt{b+1}$ with $P=\frac{2}{(b+1)(1+\sqrt{b+1})}$).

\subsection{Random Gibbs measures}\label{exgibbs}

In this section we will give details on a family of shifts which satisfy our assumptions. 
 
We will use the approach detailed in \cite{stad} which is concerned with shifts on $\N$, for example the full shift.  We note that this extends a little beyond the full shift, to the so-called BIP setting.  

We assume that $(\Omega, \p, \theta)$ is an invertible measure preserving system and let $X=N^{\N}$ and let $\sigma:X\to X$ denote the shift. For $r\in (0,1)$, let $d_r$ be the usual symbolic metric on $X$, i.e., $d_r(x, y)=r^k$ where $x_i=y_i$ for $i=0, \ldots, k-1$, but $x_k\neq y_k$.

Assume that $\phi:X\times \Omega:\to \R$ is a function which is almost surely H\"older continuous, which is to say, for $$V_n^\omega(\phi):=\sup\{|\phi_\omega(x)-\phi_\omega(y)|:x_i=y_i,\ i=0,\ldots, n-1\},$$
there is some $r\in (0,1)$ and $\kappa(\omega)\ge 0$ such that $\int\log\kappa~d\p<\infty$ where $V_n^\omega(\phi)\le \kappa(\omega)r^n$.

Define $S_n\phi_\omega(x):=\sum_{k=0}^{n-1}\phi_{\theta^{k}\omega}\circ\sigma^k(x)$.  If $x,y$ are in the same $m$-cylinder for $m\ge n$, then $|S_n\phi_\omega(x)-S_n\phi_\omega(y)|\le r^{m-n}\sum_{k=0}^{n-1}r^k\kappa(\theta^{n-k}\omega)$.  As in the proof of  \cite[Lemma 7.2]{DenKifSta08-2}, the assumption on the integrability of $\log\kappa$ implies that the above limit is finite a.s., say $\sum_{k=0}^{n-1}r^k\kappa(\theta^{n-k}\omega) \le c_\omega$.  However, it is also pointed out in \cite{stad} that if $\kappa$ is integrable, then we have an a.s. uniform upper bound, say $C_\phi$ on $\sum_{k=0}^{n-1}r^k\kappa(\theta^{n-k}\omega)$.  Given a H\"older function $\psi$, then we define
$$D_\omega(\psi):=\sup_{x,y\in X_\omega}\left\{\frac{|\psi(x)-\psi(y)|}{d_r(x, y)}\right\}.$$

Now we define the random Ruelle operator by
$$\L_\omega\psi(x)=\sum_{\sigma y=x}e^{\phi_\omega(y)}\psi(y)$$
where $\psi:X'\to [0,\infty]$ where $X'\subset X$ is such that $\L_\omega$ is well-defined. As in \cite{DenKifSta08, stad}, it can be shown that there exists some constant $\lambda_\omega$ and some measurable function $\rho_\omega$ which is uniformly bounded from below, such that  $\L_\omega\rho_\omega=\lambda_\omega\rho_{\theta\omega}$ a.s. and such that $\log\rho$ satisfies the same smoothness properties as $\phi$, i.e. we have the same $\kappa$ and $r$ in the variation.  This allows us to replace $\phi$ with
$$\varphi_\omega(x) :=\phi_\omega(x)+\log \rho_\omega-\log\rho_{\theta\omega}(\sigma x)-\log\lambda_\omega.$$
Letting $\L_\omega$ denote the corresponding transfer operator, one consequence of this is that $\L_\omega 1=1$.  Note also that random equilibrium states for $\phi$ and $\varphi$ coincide.

Now we have the property that
\begin{equation}
\int\L_\omega^n(\psi)\cdot \gamma~d\mu_\omega=\int\psi\cdot \gamma\circ \sigma^n~d\mu_{\theta^{-n}\omega}\label{eq:L basic}
 \end{equation}
for appropriate observables $\psi, \gamma$.

We will make the following almost sure assumptions on our system (which are satisfied for subshifts of finite type with H\"older potentials):

\begin{enumerate}

\item $\int\kappa~d\p<\infty$, so $\sum_{k=0}^\infty r^k\kappa(\theta^{n-k}\omega)$ is a.s. uniformly bounded, independently of $\omega$.

\item There exists a measure $\mu_\omega$ where $\L_\omega^*\mu_\omega=\mu_{\theta^{-1}\omega}$, i.e., \eqref{eq:L basic} holds for $L^1$ observables. 

\item Big images:  there exists some $C_{BIP}>0$ such that for any $n$-cylinder $U$ and $\omega\in \Omega$, $\inf {\mu_{\theta^{n}\omega}(\sigma^{n}U)}>C_{BIP}$.

\item There exist $C>0$, and $g(n)\to 0$ as $n\to \infty$ such that 
$$\left\|\L_\omega^n(\psi)-\int\psi~d\mu_{\omega}\right\|_\infty\le Cg(n)D_\omega(\psi).$$
\end{enumerate}
 
Under these conditions, it was proved in \cite[Proposition 6.1]{RT} that the sample measures satisfy (II). 

When $\theta:\Omega\to \Omega$ is a subshift of finite type on a finite alphabet, with a Gibbs measure for a H\"older potential, it is known that assumption (III'') is satisfied with $\mathcal{B}$ being the space of H\"older continuous functions \cite{kotani, ruhr}. 

For $\alpha>0$, let the norm $\|\cdot\|_\alpha$ be defined by $\|\cdot\|_\alpha=|\cdot|_\alpha+|\cdot|_\infty$ where $|f|_\alpha=\sup\left\{\frac{V_n(f)}{\alpha^n}:n\ge 0\right\}$.

It was also proved in \cite[Lemma 6.2]{RT} that for any $\beta\in (0,1]$, there exist $\alpha\in (0,1]$ and $C_\beta>0$, such that for every cylinder $C$ in $\mathcal{F}_0^n(X)$, the map $\psi_1:\omega\mapsto \mu_\omega(C)$ is $\alpha$-H\"older and $\|\psi_1\|_\alpha\leq C_\beta r^{-\beta n}$. Thus, $\|\psi_1\|_\alpha\leq\xi^n$ for some $\xi\geq0$. Moreover, since for every real-valued functions $f,g$ we have $|\max f(x)-\max g(x)|\leq \max|f(x)-g(x)|$, we obtain that the map $\psi_2:\omega\mapsto \max_{C_n}\mu_\omega(C_n)$ is $\alpha$-H\"older and $\|\psi_2\|_\alpha\leq \xi^n$. Thus, (IV') is satisfied.

Assumption (I-a) has been proved in \cite[Section 6.2]{RT}. However, our proof contains a mistake since both terms in the right-hand side of the first equation in page 149 should be with the H\"older norm. In fact, we will prove that the sample measures satisfy (I). One can observe that to obtain Theorem~2.2 in \cite{RT}, (I-a) was only used in equation (4.3) and could be substituted by (I).

Following the proof of \cite[Proposition 2.4]{ParPol90}, we fix our set $A\in\F_0^n$ and take both $w$ and $v$ to be $w(\omega)=w_A(\omega)=\mu_\omega(A)-\mu(A)$ (this normalisation by $\mu(A)$ simplifies the calculations). Note that $|w|_\infty \le 1$.  Let $\gamma\in(0,\alpha)$. For $k\in \N$, we approximate $v, w$ by $v_k, w_k$, depending only on coordinates $x_{-k}, \ldots, x_0, \ldots, x_k$ such that $|v-v_k|\le \gamma^k |v|_\gamma$ and $|w-w_k|\le \gamma^k |w|_\gamma$.   So that proof yields that for $n\ge 2k$,

\begin{align*}
\left|\int v\circ \theta^{n+g} w~d\p\right| &\le  \left|\int (v-v_k)\circ \theta^{n+g} w~d\p\right|+ \left|\int v_k\circ \theta^{n+g} (w-w_k)~d\p\right|\\
& \hspace{5cm} + \left|\int v_k\circ \theta^{n+g}w_k~d\p\right| \\
& \le \gamma^k|v|_\gamma|w|_\infty+\gamma^k|w|_\gamma|v|_\infty +K'\rho^{n+g-2k}|v|_\infty|w_k|_\infty\\
& \le 2\gamma^k C_\beta r^{-\beta n} +K'\rho^{n+g-2k}.
\end{align*}

So taking $k=\lfloor \frac{n+g}3\rfloor$, if we choose $\gamma\in(0,\alpha)$ so that $\gamma^{\frac13}r^{-\beta}<1$, we obtain
\begin{equation}\label{gibbssamp1}
\left|\int v\circ \theta^{n+g} w~d\p\right| \le 2 C_\beta\gamma^{\frac g3}+ K'\rho^{\frac{n+g}3}. 
\end{equation}
Moreover, we can observe that by (II)
\begin{eqnarray}
\left|\mu(A\cap\sigma^{-g-n}A) -\mu(A)^2\right| &=&\left|\int\mu_\omega(A\cap\sigma^{-g-n}A)d\P-\mu(A)^2\right|\nonumber\\ 
&\leq&\left|\int\mu_\omega(A)\mu_{\theta^{g+n}\omega}(A)d\P-\mu(A)^2\right|+\alpha(g)\nonumber\\ 
&=& \alpha(g)+\left|\int v\circ \sigma^{n+g} w~d\p\right|.\label{gibbssamp2}
\end{eqnarray}
Thus, by \eqref{gibbssamp1} and \eqref{gibbssamp2}, (I) is verified.

Finally, we showed that if the fiber maps satisfy conditions 1.--4. and the base transformation is a subshift of finite type on a finite alphabet with a Gibbs measure for some H\"older potential, then assumptions (I), (II), (III'') and (IV') are satisfied. Thus, if  $0<\underline{H}_2(\mu)\leq\overline{H}_2(\mu)< 2h_0$, one can apply Theorem~\ref{thprinc3}.



\section{Proofs}\label{secproof}
In this section, we will prove our theorems. Both proofs follow the line of \cite{BaLiRo} but diverge at some point since the samples measures are not invariant but satisfy \eqref{eqinv}.
\begin{proof}[Proof of Theorem \ref{thprinc}]
For simplicity we assume $\alpha(g)=e^{-g}.$ 
Let $\eps >0$ and define
$$k_n=\left\lceil\frac{2 \log n + d\log \log n}{\underline{H_2}(\mu)-\eps}\right\rceil$$
where $d$ is a constant to be chosen later.

Let us also denote
$$A_{i,j}(y)=\sigma^{-i}[C_{k_n}(\sigma^jy)]$$
and
$$S_n(x,y)= \sum_{i,j= 0, \ldots, n-1}\mathbbm{1}_{A_{i,j}(y)}(x).$$

Let $\omega\in\Omega$ such that \eqref{eqinv} is satisfied. Using Markov's inequality we obtain
\begin{equation}\label{mark}
\mu_\omega \otimes \mu_\omega \left(\left\{(x,y): M_n(x,y) \geq k_n\right\}\right)=\mu_\omega \otimes \mu_\omega \left(\left\{(x,y): S_n(x,y)\geq1\right\}\right)  \leq  \EE_\omega\left(S_n\right).
\end{equation}
Moreover, the invariance formula \eqref{eqinv} of the sample measures gives us
\begin{eqnarray*}
\EE_\omega\left(S_n\right) & = & \int \int \displaystyle\sum_{i,j=0,\ldots,n-1} \mathbbm{1}_{C_{k_n}(\sigma^j y)}(\sigma^i x) \ d\mu_\omega(x) \ d\mu_\omega(y) \\
                                                 & = & \displaystyle\sum_{i,j=0,\ldots,n-1} \int \int \mathbbm{1}_{C_{k_n}(\sigma^j y)}(x) \ d\mu_{\theta^i\omega}(x)  \ d\mu_\omega(y) \\
                                                 & = & \displaystyle\sum_{i,j=0,\ldots,n-1} \int \mu_{\theta^i\omega}\left(C_{k_n}(\sigma^j y)\right) \ d\mu_\omega(y) \\
                                               & = &  \displaystyle\sum_{i,j=0,\ldots,n-1} \int \mu_{\theta^i\omega}\left(C_{k_n}(y)\right) \ d\mu_{\theta^j\omega}(y) \\
                                                                                                  & =& \displaystyle\sum_{i,j=0,\ldots,n-1} \sum_{C_{k_n}}\mu_{\theta^i\omega}\left(C_{k_n}\right)\mu_{\theta^j\omega}\left(C_{k_n}\right).
\end{eqnarray*}

One can notice that, since the sample measures are not invariant, we cannot estimate the previous sum directly as in the deterministic case \cite{BaLiRo}. Thus, this is where our proof will differ and where we will use the mixing assumptions which where not necessary in the deterministic proof. First of all, using Markov's inequality, we observe that

\[
\PP\left(\omega: \EE_\omega(S_n)\geq \frac{1}{\log n}\right)\leq \log n . \int\displaystyle\sum_{i,j=0,\ldots,n-1} \sum_{C_{k_n}}\mu_{\theta^i\omega}\left(C_{k_n}\right)\mu_{\theta^j\omega}\left(C_{k_n}\right) d\PP(\omega)
\]
To study the behaviour of the integral on the right hand side of the previous inequality, we will divide the sum in two terms, when $i$ and $j$ are far from one another and when they are not. Let us define $m=\gamma \log n$ where $\gamma >0$ will be chosen later.

When $i$ and $j$ are close from one another, we have, using that $\mu_{\theta^j\omega}$ is a probability measure and the invariance of $\PP$ 
\begin{eqnarray*}
\sum_{|i-j|\leq m} \sum_{C_{k_n}}\int\mu_{\theta^i\omega}\left(C_{k_n}\right)\mu_{\theta^j\omega}\left(C_{k_n}\right) d\PP(\omega) &\leq & \sum_{|i-j|\leq m} \int\max_{C_{k_n}}\mu_{\theta^i\omega}\left(C_{k_n}\right).\sum_{C_{k_n}}\mu_{\theta^j\omega}\left(C_{k_n}\right) d\PP(\omega) \\
&= & \sum_{|i-j|\leq m} \int\max_{C_{k_n}}\mu_{\theta^i\omega}\left(C_{k_n}\right)d\PP(\omega)\\
&\leq & 2mn \int\max_{C_{k_n}}\mu_{\omega}\left(C_{k_n}\right)d\PP(\omega).  
\end{eqnarray*}
When $i$ and $j$ are far from one another, we can use the mixing assumptions (I) and (II) to obtain
\begin{eqnarray*}
& &\sum_{|i-j|> m} \sum_{C_{k_n}}\int\mu_{\theta^i\omega}\left(C_{k_n}\right)\mu_{\theta^j\omega}\left(C_{k_n}\right) d\PP(\omega) \\
&\leq &2\sum_{j> i+m} \sum_{C_{k_n}}\left(\alpha(m-k_n)+\int\mu_{\theta^i\omega}\left(C_{k_n}\cap \sigma^{-(j-i)}C_{k_n}\right) d\PP(\omega)\right)\\
&\leq &n^2 N^{k_n}\alpha(m-k_n)+2\sum_{j> i+m} \sum_{C_{k_n}}\mu\left(C_{k_n}\cap \sigma^{-(j-i)}C_{k_n}\right)\\
&\leq &2n^2 N^{k_n}\alpha(m-k_n)+n^2\sum_{C_{k_n}}\mu\left(C_{k_n}\right)^2.
\end{eqnarray*}
Thus, we obtain, for $n$ large enough,
\begin{eqnarray*}
\PP\left(\omega: \EE_\omega(S_n)\geq \frac{1}{\log n}\right)&\leq& \log n \left(2mn \int\max_{C_{k_n}}\mu_{\omega}\left(C_{k_n}\right)d\PP(\omega)+2n^2 N^{k_n}\alpha(m-k_n)+n^2\sum_{C_{k_n}}\mu\left(C_{k_n}\right)^2\right)\\
&\leq&\log n \left(2mn e^{-k_n(h_0-\eps/2)}+2n^2 N^{k_n}e^{-(m-k_n)}+n^2e^{-k_n(\underline{H}_2(\mu)-\eps)}\right)
\end{eqnarray*}
where the last inequality came from the definition of $h_0$ and $\underline{H}_2(\mu)$.

Then, choosing $d>0$ large enough and $\gamma>0$ large enough, we have, by definition of $k_n$ and since $\underline{H}_2(\mu)\leq 2 h_0$, that
\[\PP\left(\omega: \EE_\omega(S_n)\geq \frac{1}{\log n}\right)\leq \mathcal{O}\left(\frac{1}{\log n}\right).\]

Choosing a subsequence $\{n_{\kappa}\}_{\kappa \in \mathbb{N}}$ such that $n_{\kappa}= \lceil e^{\kappa^2}\rceil$ we have that
$$
\PP\left(\omega: \EE_\omega(S_{n_\kappa})\geq \frac{1}{\log n_\kappa}\right)\leq \frac{1}{\kappa^2} \ .
$$

Since the last quantity is summable in $\kappa$, the Borel-Cantelli lemma gives that for $\PP$-almost every $\omega\in\Omega$, if $\kappa$ is large enough then
 \[\EE_\omega(S_{n_\kappa})\leq \frac{1}{\log n_\kappa}=\frac{1}{\kappa^2}.\]
 Thus, this inequality together with \eqref{mark} gives us that for $\PP$-almost every $\omega\in\Omega$, if $\kappa$ is large enough then
 \[\mu_\omega \otimes \mu_\omega \left(\left\{(x,y): M_{n_\kappa}(x,y) \geq k_{n_\kappa}\right\}\right)\leq\frac{1}{\kappa^2}.\]
 As previously, since the last quantity is summable in $\kappa$, the Borel-Cantelli lemma gives that for $\mu_\omega\otimes \mu_\omega$-almost every $(x,y)$, if $\kappa$ is large enough then
$$M_{n_{\kappa}}(x,y) < k_{n_{\kappa}}$$
and  then
\begin{eqnarray*}
\frac{ M_{n_{\kappa}}(x,y)}{\log n_{\kappa}} \leq  \frac{1}{\underline{H}_2(\mu)-\eps}\left( 2 + \frac{\log \log n_{\kappa}}{\log n_{\kappa}}\right).
\end{eqnarray*}

Finally, taking the limit superior in the previous equation and observing that  $(n_\kappa)_\kappa$ is increasing, $(M_n)_n$ is increasing and $\underset{\kappa\rightarrow+\infty}\lim\frac{\log n_\kappa}{\log n_{\kappa+1}}=1$, we have for $\mu_\omega\otimes \mu_\omega$-almost every $(x,y)$
\[ \underset{n\rightarrow+\infty}{\overline\lim}\frac{M_n(x,y)}{\log n}=\underset{\kappa\rightarrow+\infty}{\overline\lim}\frac{\log M_{n_\kappa}(x,y)}{\log n_\kappa}\leq \frac{2}{\underline{H}_2(\mu)-\eps}.\]
Then the theorem is proved since $\eps$ can be chosen arbitrarily small.

\end{proof}


\begin{proof}[Proof of Theorem \ref{thprinc2} and Theorem \ref{thprinc3} ]
For $\eps>0$, let us define 
\[k_n=\left\lfloor\frac{1}{\overline{H}_2(\mu)+\eps}(2\log n+d\log\log n)\right\rfloor\]
where $d$ is a constant that we will choose later.

Let $\omega\in\Omega$ such that \eqref{eqinv} is satisfied. As in the proof of Theorem \ref{thprinc}, we have
\begin{equation*}
\EE_\omega(S_n)=\sum_{i,j=0,\ldots,n-1} \sum_{C_{k_n}}\mu_{\theta^i\omega}\left(C_{k_n}\right)\mu_{\theta^j\omega}\left(C_{k_n}\right).\end{equation*}

Following the lines of the proof of Theorem 7 in \cite{BaLiRo}, we have, by Chebyshev's inequality,
\begin{equation*}
\mu_\omega\otimes \mu_\omega \left((x,y): M_n(x,y)< k_n\right)=\mu_\omega\otimes \mu_\omega \left((x,y): S_n(x,y)=0\right)\leq\frac{\var_\omega(S_n)}{\EE_\omega(S_n)^2}.\end{equation*}
Thus, we need to control the variance of $S_n$. First of all, we observe that
\[
\var_\omega(S_n)
=\sum_{0\leq i,i',j,j'\leq n-1}\iint \mathbbm{1}_{C_{k_n}(\sigma^jy)}(\sigma^ix)\mathbbm{1}_{C_{k_n}(\sigma^{j'}y)}(\sigma^{i'}x)d\mu_\omega(x)d\mu_\omega(y)-\EE_\omega(S_n)^2.
\]

We will estimate the variance dividing the sum of $\var(S_n)$ into $4$ terms. Let $g=\log(n^\beta)$ where $\beta$ is a constant that we will choose later. 

For $i'-i>g+k_n$, we use the invariance formula \eqref{eqinv} and the mixing assumption (II) to obtain:
\begin{eqnarray*}
& &\iint \mathbbm{1}_{C_{k_n}(\sigma^jy)}(\sigma^ix)\mathbbm{1}_{C_{k_n}(\sigma^{j'}y)}(\sigma^{i'}x)d\mu_\omega(x)d\mu_\omega(y)\\
&=&\iint \mathbbm{1}_{C_{k_n}(\sigma^jy)}(x)\mathbbm{1}_{C_{k_n}(\sigma^{j'}y)}(\sigma^{i'-i}x)d\mu_{\theta^i\omega}(x)d\mu_\omega(y)\\
&\leq&\alpha(g)+\int\mu_{\theta^i\omega}\left(C_{k_n}(\sigma^{j}y)\right)\mu_{\theta^{i'}\omega}\left(C_{k_n}(\sigma^{j'}y)\right)d\mu_\omega(y).
\end{eqnarray*}
If, moreover, $j'-j>g+k_n$, using again the mixing assumption (II), we have
\begin{eqnarray*}
& &\int\mu_{\theta^i\omega}\left(C_{k_n}(\sigma^{j}y)\right)\mu_{\theta^{i'}\omega}\left(C_{k_n}(\sigma^{j'}y)\right)d\mu_\omega(y)\\
&=& \int\mu_{\theta^i\omega}\left(C_{k_n}(y)\right)\mu_{\theta^{i'}\omega}\left(C_{k_n}(\sigma^{j'-j}y)\right)d\mu_{\theta^j\omega}(y)\\
 &=&\sum_{C_{k_n},C'_{k_n}}\mu_{\theta^i\omega}(C_{k_n})\mu_{\theta^{i'}\omega}(C'_{k_n})\mu_{\theta^j\omega}\left(C_{k_n}\cap \sigma^{-(j'-j)}C'_{k_n}\right)\nonumber\\
 &\leq&\alpha(g)+\sum_{C_{k_n},C'_{k_n}}\mu_{\theta^i\omega}(C_{k_n})\mu_{\theta^{i'}\omega}(C'_{k_n})\mu_{\theta^j\omega}(C_{k_n})\mu_{\theta^{j'}\omega}(C'_{k_n}).
\end{eqnarray*}
However, if $j'-j\leq g+k_n$, we obtain:
\[\int\mu_{\theta^i\omega}\left(C_{k_n}(\sigma^{j}y)\right)\mu_{\theta^{i'}\omega}\left(C_{k_n}(\sigma^{j'}y)\right)d\mu_\omega(y)\leq\max_{C_{k_n}}\mu_{\theta^{i'}\omega}\left(C_{k_n}\right)\int\mu_{\theta^i\omega}\left(C_{k_n}(y)\right)d\mu_{\theta^j\omega}(y).\]
By symmetry, the case where $i'-i\leq g+k_n$ and $j'-j>g+k_n$ will be treated as the previous one.

Finally, when $|i-i'|\leq g+k_n$ and $|j-j'|\leq g+k_n$, we have:
\begin{eqnarray*}
\iint \mathbbm{1}_{C_{k_n}(\sigma^jy)}(\sigma^ix)\mathbbm{1}_{C_{k_n}(\sigma^{j'}y)}(\sigma^{i'}x)d\mu_\omega(x)d\mu_\omega(y)&\leq &\iint \mathbbm{1}_{C_{k_n}(\sigma^jy)}(\sigma^ix)d\mu_\omega(x)d\mu_\omega(y)\\
&=&\int\mu_{\theta^i\omega}\left(C_{k_n}(y)\right)d\mu_{\theta^j\omega}(y).
\end{eqnarray*}

Then, one can gather these estimates to obtain
\begin{eqnarray}
& &\mu_\omega\otimes \mu_\omega \left((x,y): M_n(x,y)< k_n\right)\nonumber\\
& \leq &\frac{2n^4\alpha(g)+2(g+k_n)\underset{0\leq i,j,i'\leq n-1}\sum\underset{C_{k_n}}\max\,\mu_{\theta^{i'}\omega}\left(C_{k_n}\right)\int\mu_{\theta^i\omega}\left(C_{k_n}(y)\right)d\mu_{\theta^j\omega}(y)}{\EE_\omega(S_n)^2}\nonumber\\
& &+\frac{(g+k_n)^2\underset{0\leq i,j\leq n-1}\sum\int\mu_{\theta^i\omega}\left(C_{k_n}(y)\right)d\mu_{\theta^j\omega}(y)}{\EE_\omega(S_n)^2}\nonumber\\
&=&\frac{2n^4\alpha(g)}{\EE_\omega(S_n)^2}+\frac{2(g+k_n)\underset{0\leq i'\leq n-1}\sum\underset{C_{k_n}}\max\,\mu_{\theta^{i'}\omega}\left(C_{k_n}\right)+(g+k_n)^2}{\EE_\omega(S_n)}.\label{ineqvar}
\end{eqnarray}
This is where the proof diverge completely from the deterministic case. Indeed, as in the proof of Theorem \ref{thprinc}, we cannot treat directly the previous estimate (which was possible in the deterministic case) and an extra care is needed. To deal with the term with the maximum, we use Markov's inequality to obtain
\begin{eqnarray}
\P\left(\underset{0\leq i'\leq n-1}\sum\underset{C_{k_n}}\max\,\mu_{\theta^{i'}\omega}\left(C_{k_n}\right)\geq1 \right)&\leq&\int_\Omega\underset{0\leq i'\leq n-1}\sum\underset{C_{k_n}}\max\,\mu_{\theta^{i'}\omega}\left(C_{k_n}\right)d\P(\omega)\nonumber\\
&=&n\int_\Omega\underset{C_{k_n}}\max\,\mu_{\omega}\left(C_{k_n}\right)d\P(\omega)\nonumber\\
&\leq&ne^{k_n(h_0-\eps)}.\label{estmax}
\end{eqnarray}
Since $\overline{H}_2(\mu)<2h_0$, one can choose $\eps$ small enough such that $ne^{k_n(h_0-\eps)}\leq n^{-\eps}$ for every $n$ large enough.

To deal with the expectation in the denominator in \eqref{ineqvar}, we will need the following lemma (which proof can be found after the proof of the theorem).

\begin{lemma}\label{lemesp}
Let $\frac{3}{4}<\delta<1$. Under the assumptions of Theorem \ref{thprinc2} or Theorem \ref{thprinc3}, we have
\[\P\left(\bigg|\EE_\omega(S_n)-n^2\sum_{C_{k_n}}\mu\left(C_{k_n}\right)^2\bigg|\geq\bigg(n^2\sum_{C_{k_n}}\mu(C_{k_n})^2\bigg)^\delta\right)=\mathcal{O}\left(\frac{1}{\log n}\right).\]
\end{lemma}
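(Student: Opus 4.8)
The plan is to prove the lemma by a first- and second-moment estimate, in the base variable $\omega$, for the random variable $\omega\mapsto\EE_\omega(S_n)=\sum_{i,j=0}^{n-1}\sum_{C_{k_n}}\mu_{\theta^i\omega}(C_{k_n})\mu_{\theta^j\omega}(C_{k_n})$, showing that it concentrates around $T_n:=n^2\sum_{C_{k_n}}\mu(C_{k_n})^2$. I would fix a cutoff $g=c\log n$ with $c$ large; since $h(k_n)=\mathcal{O}(\log n)$ and $k_n=\mathcal{O}(\log n)$, all the $\omega$-windows that appear have logarithmic size, and choosing $c$ large makes polynomial-in-$n$ multiples of $\rho(g)$ into $\mathcal{O}(n^{-M})$ for $M$ as large as we like. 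Split the double sum over $(i,j)$ into the diagonal block $|i-j|\le g$ and the off-diagonal block $|i-j|>g$, and write $\EE_\omega(S_n)=B_n(\omega)+A_n(\omega)$ accordingly. For $B_n$ only a first moment is needed: bounding $\sum_{C_{k_n}}\mu_{\theta^i\omega}(C_{k_n})\mu_{\theta^j\omega}(C_{k_n})\le\max_{C_{k_n}}\mu_{\theta^i\omega}(C_{k_n})$ and using the $\theta$-invariance of $\P$ gives $\int_\Omega B_n\,d\P\le 2ng\int_\Omega\max_{C_{k_n}}\mu_\omega(C_{k_n})\,d\P\le 2nge^{-k_n(h_0-\eps)}$; since $k_n\le\frac{2\log n+d\log\log n}{\overline{H}_2(\mu)+\eps}$ forces $T_n\ge(\log n)^{-d}$, and $\overline{H}_2(\mu)<2h_0$ forces $e^{-k_n(h_0-\eps)}\le n^{-\tau}$ with $\tau>1$ (for $\eps$ small), Markov's inequality gives $\P(B_n\ge\tfrac13 T_n^\delta)=\mathcal{O}(1/\log n)$.

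For the off-diagonal block I would first compute $\int_\Omega A_n\,d\P=\sum_{|i-j|>g}\sum_{C_{k_n}}\int_\Omega\mu_{\theta^i\omega}(C_{k_n})\mu_{\theta^j\omega}(C_{k_n})\,d\P$: using that $\omega\mapsto\mu_\omega(C_{k_n})$ is carried by a bounded window (assumption (IV)) or is controlled in Banach norm (assumption (IV')), together with the base mixing (III), (III') or (III'') and $\int_\Omega\mu_{\theta^i\omega}(C_{k_n})\,d\P=\mu(C_{k_n})$, one decorrelates the two fibre-measures on $|i-j|>g$ and obtains $\int_\Omega A_n\,d\P=T_n+\mathcal{O}(\tfrac gn T_n)+\mathcal{O}(n^{-M})$, which lies within $\tfrac13 T_n^\delta$ of $T_n$ for $n$ large. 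The heart of the proof is $\var_\P(A_n)$. I would expand $\int_\Omega A_n^2\,d\P=\sum_{|i-j|>g}\sum_{|i'-j'|>g}\sum_{C,C'}\int_\Omega\mu_{\theta^i\omega}(C)\mu_{\theta^j\omega}(C)\mu_{\theta^{i'}\omega}(C')\mu_{\theta^{j'}\omega}(C')\,d\P$ and classify the quadruples $(i,j,i',j')$ by the spacing of their four $\omega$-windows. Peeling off fibre factors that sit in isolated windows by the base mixing (iterating the two-point $\rho$-mixing of (III) — legitimate because (IV) confines each $\omega\mapsto\mu_\omega(C_{k_n})$ to a window of size $\mathcal{O}(\log n)$ — or invoking the three- and four-point mixing (III'), (III'') together with the Banach-norm bounds of (IV') on both $\omega\mapsto\mu_\omega(C_{k_n})$ and $\omega\mapsto\max_{C_{k_n}}\mu_\omega(C_{k_n})$), the quadruples whose $\{i,j\}$- and $\{i',j'\}$-windows are mutually separated factorize and cancel against $(\int_\Omega A_n\,d\P)^2$. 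What remains is governed by the quadruples with a coincidence, $\min(|i-i'|,|i-j'|,|j-i'|,|j-j'|)\le g$; for those I would peel the isolated factors and reduce to sums of the shape $\sum_{C,C'}\mu(C)\mu(C')\int_\Omega\mu_{\theta^i\omega}(C)\mu_{\theta^{i'}\omega}(C')\,d\P$, estimated by two successive Cauchy--Schwarz inequalities — first inside the integral, then in the cylinder sum — using $\sum_{C_{k_n}}\int_\Omega\mu_\omega(C_{k_n})^2\,d\P\le\int_\Omega\max_{C_{k_n}}\mu_\omega(C_{k_n})\,d\P\le e^{-k_n(h_0-\eps)}$ and $\sum_{C_{k_n}}\mu(C_{k_n})^2=T_n/n^2$. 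Counting the quadruples of each type ($\mathcal{O}(n^3g)$ with one coincidence, $\mathcal{O}(n^2g^2)$ with two, and so on) and assembling the per-quadruple bounds yields a bound for $\var_\P(A_n)$ that, after Chebyshev's inequality and using $\overline{H}_2(\mu)<2h_0$, is $\mathcal{O}(T_n^{2\delta}/\log n)$ precisely in the range $\delta>\tfrac34$. Together with $\P(|A_n-\int_\Omega A_n\,d\P|\ge\tfrac13 T_n^\delta)\le\var_\P(A_n)/(\tfrac13 T_n^\delta)^2$, the estimate on $\int_\Omega A_n\,d\P$, the bound on $B_n$ and the triangle inequality, this proves the lemma.

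The step I expect to be the real obstacle is the variance estimate. The crux is that the only randomness is in $\omega$, while the two cylinders $C,C'$ live on the same stretch of $\omega$ and so cannot be decoupled from each other; the only gain available comes from the normalisation $\sum_{C_{k_n}}\mu_\omega(C_{k_n})=1$ and the decay rate $h_0$, which is exactly what the double Cauchy--Schwarz extracts — and it is here that $\overline{H}_2(\mu)<2h_0$ is indispensable, forcing $e^{-k_n(h_0-\eps)}$ to beat $n^{-1}$ so that the $\mathcal{O}(n^3g)$ coincidence quadruples contribute below $T_n^{2\delta}$ for $\delta>\tfrac34$. A secondary, bookkeeping-type difficulty is that ``the assumptions of Theorem~\ref{thprinc2} or Theorem~\ref{thprinc3}'' actually cover three distinct mixing setups for the base, so the decorrelation of the fibre functions has to be set up so as to work uniformly in all of them, along the lines indicated above.
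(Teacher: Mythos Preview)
Your plan is essentially a second-moment argument in the base variable $\omega$, which is also what the paper does, but you organise it differently and you choose a different primary decorrelation tool. Two points are worth flagging.

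\medskip

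\textbf{Where the paper differs.} The paper does not split $\EE_\omega(S_n)=A_n+B_n$; it works directly with $\int_\Omega|\EE_\omega(S_n)-T_n|^2\,d\P$, expands, and controls $\int\EE_\omega(S_n)^2\,d\P$ by a five-case analysis on the configuration of $(i,j,i',j')$. The more substantive difference is \emph{how} products of sample-measure factors are decorrelated. The paper's main device, both for the mean and for the fully separated quadruples, is the fibred mixing (II) used \emph{backwards}: one replaces $\mu_{\theta^i\omega}(C)\mu_{\theta^j\omega}(C)$ by $\mu_{\theta^i\omega}(C\cap\sigma^{-(j-i)}C)$ up to $\alpha(g)$, integrates in $\omega$ to get a $\mu$-measure, and then uses (I) or (I-a) to split. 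Base mixing (III)/(III') is only invoked in the cases with two or three active factors (after a $\max$ has absorbed one). Your approach instead decorrelates in $\omega$ throughout via base mixing. Your double Cauchy--Schwarz for the coincidence terms is a clean alternative to the paper's $\max$-bounds and produces comparable estimates.

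\medskip

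\textbf{A genuine gap.} Under the first branch of Theorem~\ref{thprinc3}, the hypotheses are (I-a), (II), (III'), (IV'), and \emph{only} three-point base mixing is available. For the well-separated quadruples in $\var_\P(A_n)$ you need to handle
\[
\int_\Omega \mu_{\theta^i\omega}(C)\,\mu_{\theta^j\omega}(C)\,\mu_{\theta^{i'}\omega}(C')\,\mu_{\theta^{j'}\omega}(C')\,d\P
\]
with four factors. Three-point mixing (III') alone does not give this: you cannot peel one factor at a time because (IV') controls $\|\omega\mapsto\mu_\omega(C)\|_{\mathcal B}$ and $\|\omega\mapsto\max_{C}\mu_\omega(C)\|_{\mathcal B}$ but \emph{not} the $\mathcal B$-norm of products or sums of products of these functions, and (III') is not assumed to iterate. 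Nor can you factor $f_{ij}$ from $g_{i'j'}$ as single functions, since for interleaved indices their $\omega$-supports are not separated and their $\mathcal B$-norms are not controlled. This is precisely why the paper, in that setup, abandons base mixing for the separated quadruples and instead uses (II) to collapse pairs into single cylinders and (I-a) to split the resulting $\mu$-measures. You should incorporate that route for the (III') branch; under (III)$+$(IV) and under (III'')$+$(IV') your base-mixing scheme is fine.
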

Thus, using this lemma with \eqref{estmax}, we have
\[\P(|\EE_\omega(S_n)-n^2\sum_{C_{k_n}}\mu(C_{k_n})^2|\geq(n^2\sum_{C_{k_n}}\mu(C_{k_n})^2)^\delta\bigcup\underset{0\leq i'\leq n-1}\sum\underset{C_{k_n}}\max\,\mu_{\theta^{i'}\omega}\left(C_{k_n}\right)\geq1 )=\mathcal{O}\left(\frac{1}{\log n}\right).\]
Choosing a subsequence $\{n_{\kappa}\}_{\kappa \in \mathbb{N}}$ such that $n_{\kappa}= \lceil e^{\kappa^2}\rceil$, the Borel-Cantelli lemma gives that for $\PP$-almost every $\omega\in\Omega$, if $\kappa$ is large enough then
\begin{equation}\label{eqmax}
\underset{0\leq i'\leq n_\kappa-1}\sum\underset{C_{k_{n_\kappa}}}\max\,\mu_{\theta^{i'}\omega}\left(C_{k_{n_\kappa}}\right)\leq1 
\end{equation}
and
\[|\EE_\omega(S_{n_\kappa})-n_\kappa^2\sum_{C_{k_{n_\kappa}}}\mu(C_{k_{n_\kappa}})^2|\leq(n_\kappa^2\sum_{C_{k_{n_\kappa}}}\mu(C_{k_{n_\kappa}})^2)^\delta.\]
Thus, if $\kappa$ is large enough
\begin{equation}\label{eqespkappa}
\EE_\omega(S_{n_\kappa})\geq n_\kappa^2\sum_{C_{k_{n_\kappa}}}\mu(C_{k_{n_\kappa}})^2-(n_\kappa^2\sum_{C_{k_{n_\kappa}}}\mu(C_{k_{n_\kappa}})^2)^\delta\geq(n_\kappa^2\sum_{C_{k_{n_\kappa}}}\mu(C_{k_{n_\kappa}})^2)^\delta.
\end{equation}
 Thus, \eqref{ineqvar} together with \eqref{eqmax} and \eqref{eqespkappa} gives us that for $\PP$-almost every $\omega\in\Omega$, if $\kappa$ is large enough then
\begin{eqnarray*}
\mu_\omega\otimes \mu_\omega \left((x,y): M_{n_\kappa}(x,y)< k_{n_\kappa}\right)&\leq& \frac{2{n_\kappa}^4\alpha(g)}{({n_\kappa}^2\sum_{C_{k_{n_\kappa}}}\mu(C_{k_{n_\kappa}})^2)^{2\delta}}+\frac{2(g+k_{n_\kappa})+(g+k_{n_\kappa})^2}{({n_\kappa}^2\sum_{C_{k_{n_\kappa}}}\mu(C_{k_{n_\kappa}})^2)^\delta}\\
&\leq&\frac{2{n_\kappa}^4\alpha(g)}{(\log n_\kappa)^{-2b\delta}}+\frac{2(g+k_{n_\kappa})+(g+k_{n_\kappa})^2}{(\log n_\kappa)^{-b\delta}}
\end{eqnarray*}
where the last inequality came from the definition of $\overline{H}_2(\mu)$ and our choice of $k_n$. Finally, choosing $\beta$ large enough in the definition of $g$ and choosing $d<0$ small enough, we obtain that if $\kappa$ is large enough
\[\mu_\omega\otimes \mu_\omega \left((x,y): M_{n_\kappa}(x,y)< k_{n_\kappa}\right)\leq \frac{1}{\log n_\kappa}=\frac{1}{\kappa^2}.\]
Since the last quantity is summable in $\kappa$, the Borel-Cantelli lemma gives that for $\mu_\omega\otimes \mu_\omega$-almost every $(x,y)$, if $\kappa$ is large enough then
$$M_{n_{\kappa}}(x,y) \geq k_{n_{\kappa}}$$
and  then
\begin{eqnarray*}
\frac{ M_{n_{\kappa}}(x,y)}{\log n_{\kappa}} \geq  \frac{1}{\overline{H}_2(\mu)+\eps}\left( 2 + b\frac{\log \log n_{\kappa}}{\log n_{\kappa}}\right).
\end{eqnarray*}

Finally, using the same arguments as in the proof of Theorem~\ref{thprinc}, we have for $\P$-almost every $\omega$
\[ \underset{n\rightarrow+\infty}{\underline\lim}\frac{M_n(x,y)}{\log n}=\underset{\kappa\rightarrow+\infty}{\underline\lim}\frac{\log M_{n_\kappa}(x,y)}{\log n_\kappa}\geq \frac{2}{\overline{H}_2(\mu)+\eps}\]
for $\mu_\omega\otimes \mu_\omega$-almost every $(x,y)$.

Then the theorems are proved since $\eps$ can be chosen arbitrarily small.
\end{proof}
\begin{proof}[Proof of Lemma~\ref{lemesp}]
As in the previous proof, we take $k_n=\lfloor\frac{1}{\overline{H}_2(\mu)+\eps}(2\log n+d\log\log n)\rfloor$ and $g=\log(n^\beta)$ where $d<0$ and $\beta>0$ are constants to be chosen later.

First of all, we use Markov's inequality
\begin{eqnarray}
& &\P\left(\bigg|\EE_\omega(S_n)-n^2\sum_{C_{k_n}}\mu(C_{k_n})^2\bigg|\geq(n^2\sum_{C_{k_n}}\mu(C_{k_n})^2)^\delta\right)\nonumber\\
&=&\P\left(\bigg|\EE_\omega(S_n)-n^2\sum_{C_{k_n}}\mu(C_{k_n})^2\bigg|^2\geq(n^2\sum_{C_{k_n}}\mu(C_{k_n})^2)^{2\delta}\right)\nonumber\\
&\leq&\frac{\int|\EE_\omega(S_n)-n^2\sum_{C_{k_n}}\mu(C_{k_n})^2|^2 d\P(\omega)}{(n^2\sum_{C_{k_n}}\mu(C_{k_n})^2)^{2\delta}}\nonumber\\
&=&\frac{\int\left(\EE_\omega(S_n)^2+(n^2\sum_{C_{k_n}}\mu(C_{k_n})^2)^2-2\EE_\omega(S_n)n^2\sum_{C_{k_n}}\mu(C_{k_n})^2 \right)d\P(\omega)}{(n^2\sum_{C_{k_n}}\mu(C_{k_n})^2)^{2\delta}}.\label{eqcheb}
\end{eqnarray}
Firstly, we will treat the last term on the previous numerator, using the mixing assumptions (I) and (II)
\begin{eqnarray}
\int\EE_\omega(S_n)d\P&=& \sum_{C_{k_n}}\sum_{i,j=0,\ldots,n-1}\int\mu_{\theta^i\omega}\left(C_{k_n}\right)\mu_{\theta^j\omega}\left(C_{k_n}\right)d \P\nonumber\\
&\geq&\sum_{C_{k_n}}\sum_{|i-j|\geq g+k_n} \int\mu_{\theta^i\omega}\left(C_{k_n}\right)\mu_{\theta^j\omega}\left(C_{k_n}\right)d \P\nonumber\\
&\geq&2\sum_{C_{k_n}}\sum_{j\geq i+g+k_n}\left( \int\mu_{\theta^i\omega}\left(C_{k_n}\cap\sigma^{-(j-i)}C_{k_n}\right)d \P-\alpha(g)\right)\nonumber\\
&\geq&\sum_{C_{k_n}}\sum_{|i-j|\geq g+k_n}\left( \mu\left(C_{k_n}\right)^2-2\alpha(g)\right)\nonumber\\
&\geq&n(n-(g+k_n))\left( \sum_{C_{k_n}}\mu\left(C_{k_n}\right)^2-2\alpha(g)N^{k_n}\right)\nonumber\\
&\geq&n^2\sum_{C_{k_n}}\mu\left(C_{k_n}\right)^2-n(g+k_n)\sum_{C_{k_n}}\mu\left(C_{k_n}\right)^2-2n^2\alpha(g)N^{k_n}.\label{lowerint}
\end{eqnarray}
To get an estimate on \eqref{eqcheb}, we need to study the term $\int\EE_\omega(S_n)^2 d\P$. One can observe that
\[\int\EE_\omega(S_n)^2d\P= \sum_{C_{k_n},{C}_{k_n}'}\sum_{i,j,i',j'=0,\ldots,n-1}\int\mu_{\theta^i\omega}\left(C_{k_n}\right)\mu_{\theta^j\omega}\left(C_{k_n}\right)\mu_{\theta^{i'}\omega}\left({C}_{k_n}'\right)\mu_{\theta^{j'}\omega}\left({C}_{k_n}'\right)d \P.\]
We will separate the study of this integral depending on the relative distance and position between $i,j,i'$ and $j'$ and consider 5 different cases.

Case 1: $i,j,i'$ and $j'$ are all far from one another, i.e. at least at a distance greater that $g+k_n$. We will assume that $i<j<i'<j'$ (when the relative position is different, everything can be done identically because of the symmetry) and that $j-i>g+k_n$, $i'-j>g+k_n$, $j'-i'>g+k_n$. Using the mixing assumptions (I-a) and (II) (a similar estimate is obtained when (III'') is satisfied) we obtain
\begin{eqnarray}
& &\int\mu_{\theta^i\omega}\left(C_{k_n}\right)\mu_{\theta^j\omega}\left(C_{k_n}\right)\mu_{\theta^{i'}\omega}\left({C}_{k_n}'\right)\mu_{\theta^{j'}\omega}\left({C}_{k_n}'\right)d \P\nonumber\\
&\leq & \int(\mu_{\theta^i\omega}\left(C_{k_n}\cap\sigma^{-(j-i)}C_{k_n}\right)+\alpha(g))(\mu_{\theta^{i'}\omega}\left({C}_{k_n}'\cap\sigma^{-(j'-i')}{C}_{k_n}'\right)+\alpha(g))d\P\nonumber\\
&\leq& \alpha(g)^2+\alpha(g)\mu\left(C_{k_n}\cap\sigma^{-(j-i)}C_{k_n}\right)+\alpha(g)\mu\left({C}_{k_n}'\cap\sigma^{-(j'-i')}{C}_{k_n}'\right)\nonumber\\
& &+\int\mu_{\theta^i\omega}\left(C_{k_n}\cap\sigma^{-(j-i)}C_{k_n}\right)\mu_{\theta^{i'}\omega}\left({C}_{k_n}'\cap\sigma^{-(j'-i')}{C}_{k_n}'\right)d\P\nonumber\\
&\leq& \alpha(g)^2+2\alpha(g)+\int(\alpha(g)+\mu_{\theta^i\omega}\left(C_{k_n}\cap\sigma^{-(j-i)}C_{k_n}\cap\sigma^{-(i'-i)}({C}_{k_n}'\cap\sigma^{-(j'-i')}{C}_{k_n}')\right)d\P\nonumber\\
&\leq& \alpha(g)^2+3\alpha(g)+\mu\left(C_{k_n}\cap\sigma^{-(j-i)}C_{k_n}\cap\sigma^{-(i'-i)}({C}_{k_n}'\cap\sigma^{-(j'-i')}{C}_{k_n}')\right)\nonumber\\
&\leq& \alpha(g)^2+4\alpha(g)+\mu\left(C_{k_n}\cap\sigma^{-(j-i)}C_{k_n}\right)\mu\left({C}_{k_n}'\cap\sigma^{-(j'-i')}{C}_{k_n}'\right)\nonumber\\
&\leq& 2\alpha(g)^2+6\alpha(g)+\mu\left(C_{k_n}\right)^2\mu\left({C}_{k_n}'\right)^2.\label{case1}
\end{eqnarray}

Case 2: only two indices are close. We will assume that $i\leq j\leq i'\leq j'$ and that $j-i>g+k_n$, $j-i'>g+k_n$, $j'-i'\leq g+k_n$. Since the cylinders form a partition and that the sample measures are probability measures, we have
\begin{eqnarray}
& &\sum_{C_{k_n},{C}_{k_n}'}\int\mu_{\theta^i\omega}\left(C_{k_n}\right)\mu_{\theta^j\omega}\left(C_{k_n}\right)\mu_{\theta^{i'}\omega}\left({C}_{k_n}'\right)\mu_{\theta^{j'}\omega}\left({C}_{k_n}'\right)d \P\nonumber\\
&\leq& \sum_{C_{k_n}}\int\mu_{\theta^i\omega}\left(C_{k_n}\right)\mu_{\theta^j\omega}\left(C_{k_n}\right)\max_{{C}_{k_n}'}\mu_{\theta^{i'}\omega}\left({C}_{k_n}'\right)d \P.\label{eq2close}
\end{eqnarray}
When the indices are in a different position and/or the two close indices are not $j'$ and $i'$, the same idea can be used. However, one need to choose carefully with which index to take the maximum so that one index disappears with one sum and we obtain a similar term as \eqref{eq2close} where the 3 remaining indices are far from each other. Then, we use the mixing assumptions (III') and (IV') (a similar estimate is obtained when (III) and (IV) are satisfied) to get
\begin{eqnarray}
& & \sum_{C_{k_n}}\int\mu_{\theta^i\omega}\left(C_{k_n}\right)\mu_{\theta^j\omega}\left(C_{k_n}\right)\max_{{C}_{k_n}'}\mu_{\theta^{i'}\omega}\left({C}_{k_n}'\right)d \P\nonumber\\
&\leq& \sum_{C_{k_n}}\left(\int\mu_{\omega}\left(C_{k_n}\right)d\P \int\mu_{\omega}\left(C_{k_n}\right)d\P \int\max_{{C}_{k_n}'}\mu_{\omega}\left({C}_{k_n}'\right)d \P+\rho(g)\xi^{3k_n} \right)\nonumber\\
&=&\rho(g)\xi^{3k_n}N^{k_n}+ \int\max_{{C}_{k_n}'}\mu_{\omega}\left({C}_{k_n}'\right)d \P\sum_{C_{k_n}}\mu\left(C_{k_n}\right)^2.\label{case2}
\end{eqnarray}

Case 3: three indices are close and one is far from them. We will assume that $i\leq j\leq i'\leq j'$ and that $j-i\leq g+k_n$, $j-i'\leq g+k_n$, $j'-i'>g+k_n$. Since $\mu_{\theta^j\omega}\left(C_{k_n}\right)\leq1$ and $\mu_{\theta^i\omega}$ is a probability measure we have
\begin{eqnarray*}
& &\sum_{C_{k_n},{C}_{k_n}'}\int\mu_{\theta^i\omega}\left(C_{k_n}\right)\mu_{\theta^j\omega}\left(C_{k_n}\right)\mu_{\theta^{i'}\omega}\left({C}_{k_n}'\right)\mu_{\theta^{j'}\omega}\left({C}_{k_n}'\right)d \P\\
&\leq& \sum_{{C}_{k_n}'}\int\mu_{\theta^{i'}\omega}\left({C}_{k_n}'\right)\mu_{\theta^{j'}\omega}\left({C}_{k_n}'\right)d \P.
\end{eqnarray*}
When the indices are in a different position, one can use the same idea so that we stay with two indices which are far from each other and measure the same cylinder. Thus we can use the mixing assumptions (I) an (II), to obtain
\begin{eqnarray}
\sum_{{C}_{k_n}'}\int\mu_{\theta^{i'}\omega}\left({C}_{k_n}'\right)\mu_{\theta^{j'}\omega}\left({C}_{k_n}'\right)d \P
&\leq& \sum_{{C}_{k_n}'}\int\left(\mu_{\theta^{i'}\omega}\left({C}_{k_n}'\cap\sigma^{(j'-i')}{C}_{k_n}'\right)+\alpha(g)\right)d \P\nonumber\\
 &\leq&\sum_{{C}_{k_n}'}\left(\mu\left({C}_{k_n}'\cap\sigma^{(j'-i')}{C}_{k_n}'\right)+\alpha(g)\right)\nonumber\\
 &\leq&\alpha(g)N^{k_n}+\sum_{{C}_{k_n}'}\mu\left({C}_{k_n}'\right)^2.\label{case3}
 \end{eqnarray}
 
Case 4: two indices are close and both are far from the two other indices which are close from one another. We will assume that $i\leq j\leq i'\leq j'$ and that $j-i\leq g+k_n$, $j-i'> g+k_n$, $j'-i'\leq g+k_n$.  Since the sample measures are probability measures, we obtain
\begin{eqnarray}
& &\sum_{C_{k_n},{C}_{k_n}'}\int\mu_{\theta^i\omega}\left(C_{k_n}\right)\mu_{\theta^j\omega}\left(C_{k_n}\right)\mu_{\theta^{i'}\omega}\left({C}_{k_n}'\right)\mu_{\theta^{j'}\omega}\left({C}_{k_n}'\right)d \P\nonumber\\
&\leq&\int\max_{{C}_{k_n}}\mu_{\theta^{j}\omega}\left({C}_{k_n}\right)\max_{{C}_{k_n}'}\mu_{\theta^{i'}\omega}\left({C}_{k_n}'\right)d \P.\label{eq22}
\end{eqnarray}
For the other relative positions, we can observe that
\begin{itemize}
\item if the measures with the two indices that are far from each other measure different cylinders, we obtain an estimate similar to \eqref{eq22};
\item if the measures with the two indices that are far from each other measure the same cylinder, the case can be treat as case 3.
\end{itemize}
Then, using the mixing assumptions (III') and (IV') (a similar estimate is obtained when (III) and (IV) are satisfied), we have
\begin{eqnarray}
\int\max_{{C}_{k_n}}\mu_{\theta^{j}\omega}\left({C}_{k_n}\right)\max_{{C}_{k_n}'}\mu_{\theta^{i'}\omega}\left({C}_{k_n}'\right)d \P
&\leq&\rho(g)\xi^{2k_n}+ \left(\int\max_{{C}_{k_n}}\mu_{\omega}\left({C}_{k_n}\right)d \P\right)^2.\label{case4}
\end{eqnarray}

Case 5: all the indices are close. We will assume that $i\leq j\leq i'\leq j'$ and that $j-i\leq g+k_n$, $j-i'\leq g+k_n$, $j'-i'\leq g+k_n$. In this case, the relative position is irrelevant. Since the sample measures are probability measures, we obtain
\begin{equation}\label{case5}\sum_{C_{k_n},{C}_{k_n}'}\int\mu_{\theta^i\omega}\left(C_{k_n}\right)\mu_{\theta^j\omega}\left(C_{k_n}\right)\mu_{\theta^{i'}\omega}\left({C}_{k_n}'\right)\mu_{\theta^{j'}\omega}\left({C}_{k_n}'\right)d \P
\leq\int\max_{{C}_{k_n}}\mu_{\omega}\left({C}_{k_n}\right)d \P.
\end{equation}

Finally, \eqref{eqcheb} together with \eqref{lowerint}, \eqref{case1}, \eqref{case2}, \eqref{case3}, \eqref{case4} and \eqref{case5} gives us that there exists a constant $c_1$ such that

\begin{eqnarray}
& &\P\left(\bigg|\EE_\omega(S_n)-n^2\sum_{C_{k_n}}\mu(C_{k_n})^2\bigg|\geq(n^2\sum_{C_{k_n}}\mu(C_{k_n})^2)^\delta\right)\nonumber\\
&\leq&\frac{c_1}{\left(n^2\sum_{C_{k_n}}\mu\left(C_{k_n}\right)^{2}\right)^{2\delta}}\Bigg(n^3(g+k_n)\int\max_{{C}_{k_n}'}\mu_{\omega}\left({C}_{k_n}'\right)d \P\sum_{C_{k_n}}\mu\left(C_{k_n}\right)^2+n^2(g+k_n)^2\sum_{{C}_{k_n}'}\mu\left({C}_{k_n}'\right)^2\nonumber\\
& &+ n^2(g+k_n)^2 \bigg(\int\max_{{C}_{k_n}}\mu_{\omega}\left({C}_{k_n}\right)d \P\bigg)^2+n(g+k_n)^3\int\max_{{C}_{k_n}}\mu_{\omega}\left({C}_{k_n}\right)d \P \nonumber\\
 & &+n^3(g+k_n)\bigg(\sum_{C_{k_n}}\mu\left(C_{k_n}\right)^2\bigg)^2+ \Gamma(g)\Bigg)\label{ineqbigsum}
\end{eqnarray}
where
\begin{eqnarray*}
\Gamma(g)&=&n^4(\alpha(g)^2+\alpha(g))+n^3(g+k_n)\rho(g)\xi^{3k_n}N^{k_n}+n^2(g+k_n)^2\alpha(g)N^{k_n}\\ 
& &+ n^2(g+k_n)^2\rho(g)\xi^{2k_n}+n^4\alpha(g)N^{k_n}\sum_{C_{k_n}}\mu\left(C_{k_n}\right)^2.
\end{eqnarray*}
We recall that $k_n=\frac{1}{\overline{H}_2(\mu)+\eps}(2\log n+d\log\log n)$ and that $g=\log(n^\beta)$. Thus, as in \eqref{estmax}, we have
\[n\int_\Omega\underset{C_{k_n}}\max\,\mu_{\omega}\left(C_{k_n}\right)d\P(\omega)\leq n^{-\eps}\]
for every $n$ large enough.

Moreover, by definition of $\overline{H}_2(\mu)$ and our choice of $k_n$, we have for every $n$ large enough
\[n^2\sum_{C_{k_n}}\mu\left(C_{k_n}\right)^{2}\geq \left(\log n\right)^{-d}.\]
First of all, we choose $\beta\gg1$ so that for any n large enough
\[\Gamma(g)\leq 1.\]
Then, for the first term in \eqref{ineqbigsum}, we have
\begin{eqnarray*}
\frac{n^3(g+k_n)\int\max_{{C}_{k_n}'}\mu_{\omega}\left({C}_{k_n}'\right)d \P\sum_{C_{k_n}}\mu\left(C_{k_n}\right)^2}{\left(n^2\sum_{C_{k_n}}\mu\left(C_{k_n}\right)^{2}\right)^{2\delta}}
&=&\frac{n(g+k_n)\int\max_{{C}_{k_n}'}\mu_{\omega}\left({C}_{k_n}'\right)d \P}{\left(n^2\sum_{C_{k_n}}\mu\left(C_{k_n}\right)^{2}\right)^{2\delta-1}}\\
&\leq&\frac{(g+k_n)n^{-\eps}}{\left(\log n\right)^{-b(2\delta-1)}}.
\end{eqnarray*}
For the second term in \eqref{ineqbigsum}, we have
\begin{eqnarray*}
\frac{n^2(g+k_n)^2\sum_{{C}_{k_n}'}\mu\left({C}_{k_n}'\right)^2}{\left(n^2\sum_{C_{k_n}}\mu\left(C_{k_n}\right)^{2}\right)^{2\delta}}
&=&\frac{(g+k_n)^2}{\left(n^2\sum_{C_{k_n}}\mu\left(C_{k_n}\right)^{2}\right)^{2\delta-1}}\\
&\leq&\frac{(g+k_n)}{\left(\log n\right)^{-b(2\delta-1)}}.
\end{eqnarray*}
For the third term in \eqref{ineqbigsum}, we have
\begin{eqnarray*}
\frac{n^2(g+k_n)^2 \bigg(\int\max_{{C}_{k_n}}\mu_{\omega}\left({C}_{k_n}\right)d \P\bigg)^2}{\left(n^2\sum_{C_{k_n}}\mu\left(C_{k_n}\right)^{2}\right)^{2\delta}}
&\leq&\frac{n^{-2\eps}(g+k_n)^2}{\left(\log n\right)^{-2b\delta}}.
\end{eqnarray*}
For the fourth term in \eqref{ineqbigsum}, we have
\begin{eqnarray*}
\frac{n(g+k_n)^3\int\max_{{C}_{k_n}}\mu_{\omega}\left({C}_{k_n}\right)d \P}{\left(n^2\sum_{C_{k_n}}\mu\left(C_{k_n}\right)^{2}\right)^{2\delta}}
&\leq&\frac{n^{-\eps}(g+k_n)^3}{\left(\log n\right)^{-2b\delta}}.
\end{eqnarray*}
And, for the fifth term in \eqref{ineqbigsum}, we have
\begin{eqnarray*}
\frac{n^3(g+k_n)\bigg(\sum_{C_{k_n}}\mu\left(C_{k_n}\right)^2\bigg)^2}{\left(n^2\sum_{C_{k_n}}\mu\left(C_{k_n}\right)^{2}\right)^{2\delta}}
&=&\frac{(g+k_n)\bigg(\sum_{C_{k_n}}\mu\left(C_{k_n}\right)^{2}\bigg)^{1/2}}{\left(n^2\sum_{C_{k_n}}\mu\left(C_{k_n}\right)^{2}\right)^{2\delta-3/2}}\\
&\leq&\frac{(g+k_n)}{\left(\log n\right)^{-b(2\delta-3/2)}}.
\end{eqnarray*}
Finally, putting all these estimates together in \eqref{ineqbigsum}, choosing $d\ll -1$ and since $3/4<\delta<1$, we obtain
\[\P\left(\bigg|\EE_\omega(S_n)-n^2\sum_{C_{k_n}}\mu(C_{k_n})^2\bigg|\geq(n^2\sum_{C_{k_n}}\mu(C_{k_n})^2)^\delta\right)=\mathcal{O}(\frac{1}{\log n}).\]
\end{proof}


\textbf{{Acknowledgements:}} The author would like to thank Rodrigo Lambert for various comments on a first draft of the paper, Mike Todd for fruitful discussions and for fixing the mistake found in \cite{RT} and the referee for useful suggestions to improve the paper.


\begin{thebibliography}{99}

\bibitem{AbCa}
 M. Abadi and L. Cardeno,
{\em Renyi entropies and large deviations for the first-match
function,}
IEEE Trans. Inf. Theory(61), 4 (2015), 1629--1639.

\bibitem{AbLa2}
 M.\ Abadi and R.\ Lambert,
{\em From the divergence between two measures to the shortest path between two observables,}
Ergod. Theory Dyn., {\bf 39} (2019), no. 7, 1729--1744.

\bibitem{AV}M. Abadi and N. Vergne, \emph{Poisson approximation for search of rare words in DNA sequences}, ALEA Lat. Am. J. Probab. Math. Stat. 4 (2008), 223--244.

\bibitem{AW}
R. Arratia and M. Waterman, \emph{An Erd\"os-R\'enyi Law with Shifts,} Adv. Math. \textbf{55} (1985), 13-23.

\bibitem{AW1}R. Arratia and M. Waterman,
\newblock \emph{Critical phenomena in sequence matching}, Ann. Probab., {\bf 13} (1985), no. 4, 1236--1249. 


\bibitem{AW2}R. Arratia and M. Waterman,
\newblock \emph{The Erd\"os-R\'enyi strong law for pattern matching with a given proportion of mismatches}, Ann. Probab., {\bf 17} (1989), no. 3, 1152--1169.

\bibitem{AW3}R. Arratia and M. Waterman,
\newblock \emph{A phase transition for the score in matching random sequences allowing deletions}, Ann. Appl. Probab.,  {\bf 4} (1994), no. 1, 200--225. 
 
\bibitem{AGW}R. Arratia, L. Gordon and M. Waterman,
\newblock \emph{An extreme value theory for sequence matching}, Ann. Statist., {\bf 14} (1986), no. 3, 971--993.




\bibitem{BaLiRo} V. Barros, L. Liao and J. Rousseau,
\emph{On the shortest distance between orbits and the longest common substring problem,} Adv. Math., \textbf{334} (2019), 311-339.

\bibitem{boggun} 
T. Bogensch\"{u}tz and V.M. Gundlach, \emph{Ruelles' transfer operator for random subshifts of finite type},
 Erg. Th. Dynam. Sys.~\textbf{15} (1995) 413--447.

\bibitem{ciuperca}G. Ciuperca, V. Girardin, L. Lhote, \emph{ Computation and Estimation of Generalized Entropy Rates for Denumerable Markov Chains}, IEEE Trans. Inf. Theory, \textbf{57} (2011), 4026 -- 4034.

\bibitem{ColletRedig} P. Collet, C. Giardina and F. Redig, \emph{Matching with shift for one-dimensional Gibbs measures}, Ann. Appl. Probab. 19 (2009), no. 4, 1581--1602. 


\bibitem{colaro} A. Coutinho, R. Lambert and J. Rousseau, \emph{Matching strings in encoded sequences},  Bernoulli 26 (2020), no. 3, 2021--2050.


\bibitem{deb} L. Debowski, \emph{Maximal repetition and zero entropy rate}, IEEE Trans. Inform. Theory 64 (2018), no. 4, part 1, 2212--2219.

\bibitem{DKZ}
A. Dembo, S. Karlin and O. Zeitouni, \emph{Critical phenomena for sequence matching with scoring,} Ann. Probab. \textbf{22}, no 4 (1994), 1993-2021.

\bibitem{DenKifSta08} M.\ Denker, Y.\ Kifer, M.\ Stadlbauer,
\emph{ Conservativity of random Markov fibred systems,}
 Ergodic Theory Dynam. Systems \textbf{ 28} (2008) 67--85.
 
 \bibitem{DenKifSta08-2} M.\ Denker, Y.\ Kifer, M.\ Stadlbauer,
\emph{Thermodynamic formalism for random countable Markov shifts,}
Disc. Cont. Dyn. Sys. \textbf{22} (2008) 131--164
 
\bibitem{FFV} A.C. M. Freitas, J.M. Freitas, and S. Vaienti,\emph{ Extreme value laws for non stationary processes generated by sequential and random dynamical systems}, Ann. Inst. Henri Poincar\'e Probab. Stat. 53 (2017), no. 3, 1341--1370.






\bibitem{HT} N. Haydn and M. Todd, \emph{Return times at periodic points in random dynamics}, Nonlinearity 30 (2017), no. 1, 73--89. 

\bibitem{haydnvaienti}
N. Haydn and S. Vaienti, \emph{The R\'enyi entropy function and the large deviation of short return times,}
Ergodic Theory Dynam. Systems \textbf{30} (2010), no. 1, 159-179.

\bibitem{KO} S. Karlin, F. Ost,\emph{ Maximal length of common words among random letter sequences},
Ann. Probab. \textbf{16} (1988) 535--563.



\bibitem{kif} Yu. Kifer, \emph{Thermodynamic formalism for random transformation revisited},
Stochastics and Dynamics~\textbf{8}, no. 1 (2008) 77--102.


\bibitem{KifLim} Yu. Kifer, \emph{Limit Theorems for Random Transformations and Processes in Random Environments}, Trans. Amer. Math. Soc., 350, no. 4, 1481--1518, 1998.


\bibitem{kifliu} Yu. Kifer and P.-D. Liu, Random dynamics, \emph{Handbook of dynamical systems}, eds. B. Hasselblatt and A. Katok (Elsevier 2006), pp. 379--499.

\bibitem{KASW}
I. Kontoyiannis, P. H. Algoet,
Y. M. Suhov and A. J. Wyner, \emph{Nonparametric Entropy Estimation for Stationary Processes and Random Fields, with Applications to English Text,}   IEEE
Trans. Inform. Theory \textbf{44}, no 3 (1998), 1319-1327.

\bibitem{kotani} M. Kotani and T. Sunada, \emph{The pressure and higher correlations for an Anosov diffeomorphism}, Ergodic Theory Dynam. Systems 21 (2001), no. 3, 807--821.

\bibitem{luc} T. Luczak and W. Szpankowski, \emph{A suboptimal lossy data compression based on approximate pattern
matching}, IEEE Trans. Inform. Theory 43 (1997), 1439--1451.


\bibitem{Manson}
M. Mansson, \emph{On Compound Poisson Approximation for Sequence Matching,} Combin. Probab. Comput. \textbf{9} (2000), 529-548.



\bibitem{Neu}
C. Neuhauser, \emph{A Phase Transition for the Distribution of Matching Blocks,} Combin. Probab. Comput. \textbf{5} (1996), 139-159.

\bibitem{ParPol90} W.\ Parry, M.\ Pollicott,
\emph{ Zeta functions and the periodic orbit structure of hyperbolic dynamics,} Ast\'erisque \textbf{187-188} (1990).

\bibitem{RSW}
G. Reinert, S.Schbath and M. Waterman (2005). \emph{Applied Combinatorics on Words}, volume {\bf 105} of Encyclopedia of Mathematics and its Applications, chapter Statistics on Words with Applications to Biological Sequences. Cambridge University Press, 2005.

 \bibitem{Rousseau}
 J. Rousseau, \emph{Hitting time statistics for observations of dynamical systems,} Nonlinearity \textbf{27} (2014), 2377-2392.

\bibitem{RS}
\newblock J. Rousseau and B. Saussol,
\newblock \emph{Poincar\'e recurrence for observations},
\newblock  Trans. Amer. Math. Soc., \textbf{362} (2010), 5845--5859.


 \bibitem{RSV} J.\ Rousseau, B.\ Saussol, P.\ Varandas, \emph{Exponential law for random subshifts of finite type}, 
 Stochastic Process.~Appl. \textbf{124} (2014)  3260--3276.

\bibitem{RT} J. Rousseau and M. Todd, \emph{Hitting times and periodicity in random dynamics}, J. Stat. Phys. 161 (2015) 131--50.

\bibitem{ruhr} R. R\"uhr and R. Shi, \emph{Quantitative Multiple pointwise convergence and effective multiple correlations}, preprint, arXiv:1904.13055

\bibitem{Shield} P. Shields, \emph{
String matching: the ergodic case}, Ann. Probab. 20 (1992), no. 3, 1199--1203.

\bibitem{stad}M. Stadlbauer, \emph{Coupling methods for random topological Markov chains}, Ergod. Th. Dynam. Sys. (2017), 37, 971--994



\bibitem{W-book}M. Waterman, \emph{Introduction to Computational Biology: Maps, Sequences and Genomes},
Chapman and Hall, London (1995).
\end{thebibliography}
\end{document}